\tikzset{->-/.style={decoration={
			markings,
			mark=at position #1 with {\arrow{>}}},postaction={decorate}}}
\tikzset{-<-/.style={decoration={
					markings,
					mark=at position #1 with {\arrow{<}}},postaction={decorate}}}
\newtheorem{statement}[defi]{Statement}
\newcommand{\bC}{\mathbb{C}}
\newcommand{\bF}{\mathbb{F}}
\newcommand{\bL}{\mathbb{L}}
\newcommand{\bN}{\mathbb{N}}
\newcommand{\bQ}{\mathbb{Q}}
\newcommand{\bR}{\mathbb{R}}
\newcommand{\bZ}{\mathbb{Z}}
\newcommand\lra{\longrightarrow}
\DeclareMathOperator*{\colim}{colim}
\newcommand{\Frob}{\mathrm{Frob}}
\newcommand{\et}{\text{{\'e}t}}
\newcommand{\Hur}{\mathrm{Hur}}
\newcommand{\CHur}{\mathrm{CHur}}
\newcommand{\hcoker}{/\!\!/}
\mathchardef\ordinarycolon\mathcode`\:
\renewcommand{\leq}{\leqslant}
\renewcommand{\geq}{\geqslant}
\date{Juin 2019}
\title[Homology of Hurwitz spaces]{Homology of Hurwitz spaces\\and\\the Cohen--Lenstra heuristic for function fields}
\author{Oscar Randal-Williams}
\address{University of Cambridge\\
Centre for Mathematical Sciences\\
Wilberforce Road\\
Cambridge CB3 0WB\\
United Kingdom}
\email{or257@cam.ac.uk}
\begin{document}

\maketitle


\section*{Introduction}
\label{sec:introduction}

Ellenberg and Venkatesh~\cite{EV} introduced the idea of analysing the function field analogue of the conjecture of Malle (on the distribution of number fields with given Galois group $G$) by relating Malle's conjectural upper bound with the asymptotics of $\bF_q$-point counts on Hurwitz schemes~$\mathsf{H}_{G,n}$. Under the heuristic that each $\bF_q$-rational component of $\mathsf{H}_{G,n}$ contains $q^n$ $\bF_q$-points they were able to precisely reproduce the upper bound in Malle's conjecture.

In a breakthrough paper, Ellenberg, Venkatesh, and Westerland~\cite{EVW} applied similar reasoning to relate the function field analogue of the Cohen--Lenstra heuristic (on the distribution of imaginary number fields with $\ell$-parts of their class groups isomorphic to a fixed group $A$) with the asymptotics of $\bF_q$-point counts on certain Hurwitz schemes~$\mathsf{Hn}_{G,n}^c$, with $G = A \rtimes \bZ^\times$ a generalised dihedral group and $c \subset G$ the conjugacy class of involutions. In this case they were---remarkably---able to \emph{justify} the heuristic that each $\bF_q$-rational component of $\mathsf{Hn}_{G,n}^c$ contains $q^n$ $\bF_q$-points, by using the Grothendieck--Lefschetz trace formula and a comparison between {\'e}tale and singular cohomology to reduce it to a problem in algebraic topology, and then solving this problem.

The topological problem concerns the singular homology of the corresponding Hurwitz spaces $\mathsf{Hn}_{G,n}^c(\bC)^{an}$. It is easy to show that the total dimension of the homology of these spaces is at most $(2|c|)^n$, but in order to show that the main term in the Grothendieck--Lefschetz trace formula is not overwhelmed as $n \to \infty$ one must show that there is not too much homology in homological degrees which are small compared with $n$. Ellenberg, Venkatesh, and Westerland accomplish this by proving a \emph{homological stability theorem} for these Hurwitz spaces.

The phenomenon of homological stability was discovered by Quillen, in his analysis of the homology of general linear groups in relation to algebraic $K$-theory. This is the phenomenon that for many natural sequences of spaces
$$X_1 \lra X_2 \lra X_3 \lra \cdots$$
the induced maps $H_d(X_{n-1}) \to H_d(X_n)$ are isomorphisms as long as $d \leq f(n)$, for some divergent function $f$. In this case $H_d(X_n)$ agrees with the direct limit $H_d(X_\infty) = \colim_i H_d(X_i)$ for all but finitely many $n$. There is a remarkable range of families $\{X_n\}$ for which homological stability is known to hold. When the $X_n$ are Eilenberg--MacLane spaces for groups $G_n$, one may take: symmetric groups, braid groups, general linear groups over rings of finite stable rank~\cite{vdK}, mapping class groups of surfaces~\cite{Harer}, automorphism groups of free groups~\cite{HatcherAutFn}, Higman--Thompson groups~\cite{SzymikWahl}, Coxeter groups~\cite{HepworthCox}, and many others. When the $X_n$ are moduli spaces, broadly interpreted, one may take: configuration spaces~\cite{McDuff}, classifying spaces for fibre bundles~\cite{GRWHomStabI}, classifying spaces for fibrations or block bundles~\cite{BM}, and many variants of these. 

In a related direction, the more recent development of \emph{representation
  stability}~\cite{CF} ---in which there is a sequence of groups $\Gamma_n$
acting on the $X_n$ in a compatible manner and the eventual behaviour of
$H_d(X_n)$ as a $\Gamma_n$-representation is studied--- may be applied to study asymptotics of weighted point counts (i.e.\ moments) of sequences of schemes, cf.~\cite{CEFVar}.

There is a more or less standard pattern in most proofs of homological stability,\footnote{Though Galatius, Kupers, and I have recently proposed another~\cite{GKR-W}, which in fortunate circumstances can provide information beyond classical homological stability, for example in the case of mapping class groups of surfaces~\cite{GKR-W2}.} in which one constructs an approximation to $X_n$ from the spaces $\{X_i\}_{i<n}$ in a standard way (cf.~\cite{R-WW}), and then is left with the problem of proving that it is a good approximation, which invariably leads one to analyse the connectivity of certain simplicial complexes associated to the situation in hand. Ellenberg, Venkatesh, and Westerland follow this general strategy, but because the Hurwitz spaces $\mathsf{Hn}_{G,n}^c(\bC)^{an}$ are disconnected a new kind of difficulty arises. To surmount this difficulty, they invent a clever piece of homological algebra.

In this exposition of~\cite{EVW} I will present their argument differently to the way it appears in that paper, closer to the framework of~\cite{GKR-W} than to the classical approach to homological stability described above. While many of the key steps are unchanged, I find that this streamlined argument clarifies the essential points.

\section{The Cohen--Lenstra heuristic for function fields}\label{sec:CLHeur}

Let $\ell$ be a prime number. The \emph{Cohen--Lenstra distribution} is the probability measure~$\mu$ on the set of isomorphism classes of finite abelian $\ell$-groups given by
$$\mu(A) = \frac{\prod_{i\geq 1}(1-\ell^{-i})}{|\mathrm{Aut}(A)|}.$$
The numerator is simply a normalisation to make $\mu$ into a probability measure: what is important is that an abelian $\ell$-group is counted with weight proportional to the reciprocal of the size of its automorphism group, as one does in the cardinality of groupoids. 

The original Cohen--Lenstra heuristic~\cite{CL} suggests that when $\ell$ is odd the $\ell$-part of the class groups of imaginary quadratic extensions of $\bQ$ is distributed according to $\mu$. The analogue for function fields was first considered by Friedman and Washington~\cite{FW}. In the function field case, $K = \bF_q(t)$, a quadratic extension $L \supset K$ is called \emph{imaginary} if it is ramified at infinity, or equivalently if it is of the form $K(\sqrt{f(t)})$ with $f$ a squarefree polynomial of odd degree $n$. 

For $n$ odd let $\mathfrak{S}_n$ denote the set of such imaginary quadratic extensions $L \supset K$  up to $K$-isomorphism, and for a fixed finite abelian $\ell$-group $A$ let $\iota : \mathfrak{S}_n \to \{0,1\}$ denote the indicator function for those $L$ with $\ell$-part of their class group isomorphic to $A$. Define the upper and lower densities
\begin{align*}
\delta^+(q) := \limsup_{n \to \infty}\frac{\sum_{L \in \mathfrak{S}_n} \iota(L)}{|\mathfrak{S}_n|} \quad\text{ and }\quad \delta^-(q) := \liminf_{n \to \infty}\frac{\sum_{L \in \mathfrak{S}_n} \iota(L)}{|\mathfrak{S}_n|}.
\end{align*}
The formulation of the Cohen--Lenstra heuristic proved by Ellenberg, Venkatesh, and Westerland is as follows, where a prime power $q$ is called \emph{good for $\ell$} if $q$ is odd and neither $q$ nor $q-1$ is divisible by $\ell$.

\begin{theo}[Ellenberg--Venkatesh--Westerland]\label{thm:Main}
Suppose $\ell$ is odd. As $q \to \infty$ with $q$ good for $\ell$, both $\delta^+(q)$ and $\delta^-(q)$ converge to $\mu(A)$.
\end{theo}

I will mainly discuss the solution of the topological problem that Theorem~\ref{thm:Main} reduces to, but will first briefly outline how this topological problem arises.

\section{Reduction to counting points of Hurwitz schemes}

The key property of the Cohen--Lenstra distribution $\mu$ is that for any finite abelian $\ell$-group $A$ the expected number of surjections $A' \to A$ is 1 when $A'$ is distributed according to $\mu$, and in fact this property characterises $\mu$~\cite[Lemma 8.2]{EVW}. For $L \in \mathfrak{S}_n$ one writes $m_A(L)$ for the number of surjections from the class group of $L$ to $A$. Using the above characterisation of the measure $\mu$, Ellenberg, Venkatesh, and Westerland show~\cite[p.\ 777]{EVW} that to prove Theorem~\ref{thm:Main} it suffices to prove the following.
 
\begin{theo}[Theorem 8.8 of~\cite{EVW}]\label{thm:EVW8point8}
Suppose $\ell$ is odd and $q$ is good for $\ell$. There is a constant $B=B(A)$ such that
$$\left|\frac{\sum_{L \in \mathfrak{S}_n} m_A(L)}{|\mathfrak{S}_n|}-1 \right| \leq \frac{B}{\sqrt{q}}$$
for all $n$ and $q$ with $\sqrt{q} > B$, $n > B$, and $n$ odd.
\end{theo}

A pair $(G,c)$ of a finite group $G$ and a conjugation-invariant subset $c \subset G$ is called \emph{admissible} if $c$ generates $G$ and if whenever $g \in c$ then $g^n \in c$ for all $n$ coprime to~$|G|$.  If $(G,c)$ is admissible then there are \emph{Hurwitz schemes} $\mathsf{Hn}_{G,n}^c$ over $\mathsf{Spec}(\bZ[1/|G|])$ which parametrise connected branched Galois $G$-covers of the affine line with $n$ branch points and monodromy in the class $c$. These schemes are formed out of similar schemes $\mathsf{H}_{G,n}$ parametrising branched Galois $G$-covers of the projective line, which have been constructed by Romagny and Wewers~\cite{RomagnyWewers}.

The crucial relation between the Cohen--Lenstra heuristic for function fields and these Hurwitz schemes, which was discovered by Yu~\cite{Yu}, is as follows. For an odd prime $\ell$ and an abelian $\ell$-group $A$, form the semi-direct product $G = A \rtimes \bZ^\times$, where $\bZ^\times$ acts on~$A$ by inversion, and let $c \subset G$ denote the conjugacy class of involutions. The pair $(G,c)$ is admissible in the sense defined above, and using Proposition 8.7 of~\cite{EVW}, which for this choice of $(G,c)$ relates surjections $\mathrm{Cl}(\mathcal{O}_L) \to A$ to branched Galois $G$-covers of the affine line $\mathsf{A}^1_{\bF_q}$ with monodromy in the class $c$, one shows that
$$\sum_{L \in \mathfrak{S}_n} m_A(L) = 2\cdot\#\mathsf{Hn}_{G,n}^c(\bF_q).$$
On the other hand, the number of squarefree polynomials of degree $n$ is $(q-1)(q^n-q^{n-1})$, but the sets of $\frac{q-1}{2}$ polynomials which differ only by a square in $\bF_q^\times$ define isomorphic quadratic extensions, so
$$|\mathfrak{S}_n| = 2 \cdot (q^n - q^{n-1}).$$
To prove Theorem~\ref{thm:Main} it therefore suffices to prove the following.

\begin{statement}\label{state:PtCountEstimate}
Suppose $\ell$ is odd and $q$ is good for $\ell$. There is a constant $B=B(A)$ such that
$$\left|\frac{\#\mathsf{Hn}_{G,n}^c(\bF_q)}{q^n}-1 \right| \leq \frac{B}{\sqrt{q}}$$
for all $n$ and $q$ with $\sqrt{q} > B$, $n > B$, and $n$ odd.
\end{statement}

\section{Point counting and homological stability}

\subsection{Example of the method: squarefree polynomials} 

I will first illustrate how algebraic topology may be used to prove results such as Statement~\ref{state:PtCountEstimate} with a much simpler example. The squarefree, monic, degree $n$ polynomials over $\bF_q$ are the $\bF_q$-points $\mathsf{C}_n(\bF_q)$ of a scheme $\mathsf{C}_n$ over $\mathsf{Spec}(\bZ)$. Parametrising monic degree $n$ polynomials by their $n$ coefficients, $\mathsf{C}_n$ may be described as the complement in~$\mathsf{A}^n$ of the zero-locus of the discriminant morphism $\Delta: \mathsf{A}^n \to \mathsf{A^1}$.

As a squarefree, monic, degree $n$ polynomial over~$\bC$ is determined by its unordered set of $n$ distinct roots, the set of complex points in the analytic topology $\mathsf{C}_n(\bC)^{an}$ is precisely the space of configurations of $n$ distinct unordered points in $\bC$. This space is well-studied\footnote{Its fundamental group is Artin's braid group on $n$ strands, $\beta_n$, and in fact $\mathsf{C}_n(\bC)^{an}$ is an Eilenberg--MacLane space for this group. The homology of this space therefore coincides with the group homology of $\beta_n$, and can also be studied from this perspective.} and its $\bQ$-homology can be computed by many methods (originally by Arnol'd~\cite{Arnold}): for $n \geq 2$ it is
\begin{equation}\label{eq:HomologyBraid}
H_i(\mathsf{C}_n(\bC)^{an};\bQ) \cong \begin{cases}
\bQ & \text{ if $i$ is 0 or 1}\\
0 & \text{ otherwise}.
\end{cases}
\end{equation}
Furthermore, the discriminant restricts to a morphism $\Delta : \mathsf{C}_n \to \mathsf{A^1} \setminus \{0\}$ which on complex points gives a continuous map $\mathsf{C}_n(\bC)^{an} \to \bC^\times$, and this map induces the above isomorphism on $\bQ$-homology.

To evaluate the number $\# \mathsf{C}_n(\bF_q)$ of squarefree, monic, degree $n$ polynomials over~$\bF_q$ one may try to apply the Grothendieck--Lefschetz trace formula to the smooth \mbox{$n$-dimensional} scheme $\mathsf{C}_n$, in the form
$$\# \mathsf{C}_n(\bF_q) = q^n \sum_{i=0}^n (-1)^i\mathrm{Tr}(\Frob_q : H^i_{\et}(\mathsf{C}_n /\overline{\bF}_q;\bQ_r)^\vee \circlearrowleft)$$
where $r$ is an auxiliary prime number not dividing $q$. If there were a natural comparison isomorphism
\begin{equation}\label{eq:ArtinComp}
H^i_{\et}(\mathsf{C}_n /\overline{\bF}_q;\bQ_r) \overset{\sim}\lra H^i(\mathsf{C}_n(\bC)^{an} ;\bQ_r),
\end{equation}
then by the isomorphism \eqref{eq:HomologyBraid} and the fact that it is induced by the discriminant morphism, one would be able to calculate the {\'e}tale cohomology of $\mathsf{C}_n /\overline{\bF}_q$, as well as the action of $\Frob_q$, to be
\begin{equation*}
H^i_{\et}(\mathsf{C}_n /\overline{\bF}_q;\bQ_r) \cong \begin{cases}
\bQ_r(0) & \text{ if $i=0$}\\
\bQ_r(-1) & \text{ if $i=1$}\\
0 & \text{ otherwise}.
\end{cases}
\end{equation*}
This recovers the well-known calculation $\# \mathsf{C}_n(\bF_q) = q^n - q^{n-1}$.

The only step I have missed is the comparison isomorphism \eqref{eq:ArtinComp}. There is indeed such an isomorphism, which in this situation has been established by Lehrer~\cite{Lehrer}.

\vspace{2ex} 

That one knows all the homology of braid groups as in \eqref{eq:HomologyBraid} is a luxury that will not be available in typical applications of this method, so let me revisit the above example assuming only a more representative amount of information. Using topological methods one may hope to establish an analogue of the following.

\begin{theo}[Homological stability]\label{thm:BraidHomStab}
There are maps
$$H_i(\mathsf{C}_{n-1}(\bC)^{an} ; \bQ) \lra H_i(\mathsf{C}_n(\bC)^{an} ; \bQ),$$
which are isomorphisms as long as $2i < n$.
\end{theo}

As I described in the introduction, there is a large literature on such homological stability theorems. By very different topological techniques, one may also hope to establish an analogue of the following.

\begin{theo}[Stable homology]\label{thm:BraidStabHom}
There is an isomorphism
$$\colim\limits_{n \to \infty} H_i(\mathsf{C}_n(\bC)^{an} ; \bQ) \cong \begin{cases}
\bQ & \text{ if $i$ is 0 or 1}\\
0 & \text{ otherwise},
\end{cases}$$
where the direct limit is formed using the maps in Theorem~\ref{thm:BraidHomStab}.
\end{theo}

There is also a significant literature on identifying the stable homology of sequences of spaces, the most significant in recent years being the identification of the stable homology of mapping class groups by Madsen and Weiss~\cite{MW}, the identification of the stable homology of $\mathrm{Aut}(F_n)$ by Galatius~\cite{GalatiusAutFn}, the identification of the stable homology of diffeomorphism groups of certain high-dimensional manifolds by Galatius and myself~\cite{GRWHomStabII}, and the identification of the homology of Higman--Thompson groups by Szymik and Wahl~\cite{SzymikWahl}. 

In addition, one may hope to estimate the complexity of the spaces in question, analogous to the following, which has been proved by Fox and Neuwirth~\cite{FN} and by Fuks~\cite{Fuks}.

\begin{theo}[Complexity]\label{thm:BraidComplexity}
The space $\mathsf{C}_n(\bC)^{an}$ has the homotopy type of a cell complex having $2^n$ cells.
\end{theo}

\subsubsection{The conclusion using Deligne's bounds} Using the Grothendieck--Lefschetz trace formula together with Deligne's theorem that eigenvalues of $\Frob_q$ acting on $H^i_{\et}(\mathsf{C}_n /\overline{\bF}_q;\bQ_r)^\vee \cong H^{2n-i}_{\et, c}(\mathsf{C}_n /\overline{\bF}_q;\bQ_r)(-n)$ are bounded above in absolute value by $q^{-i/2}$, one finds that for each fixed $n$ the limit
$$\lim_{q \to \infty} \frac{\# \mathsf{C}_n(\bF_q)}{q^n}$$
is equal to $\lim_{q \to \infty}\mathrm{Tr}(\Frob_q : H^0_{\et}(\mathsf{C}_n /\overline{\bF}_q;\bQ_r) \circlearrowleft)$ if either limit exists. The latter does and is equal to 1, as $\mathsf{C}_n$ has a single geometrically connected component, which is defined over any $\bF_q$.

\subsubsection{The conclusion using stability} The limit over $q$ formed above is often not so interesting, and instead one would like to know whether the limit
$$\lim_{n \to \infty} \frac{\# \mathsf{C}_n(\bF_q)}{q^n}$$
exists and what it is. Suppose that one only knows the Homological Stability Theorem and the Complexity Theorem. For $n \geq 2i$ one may then estimate
$$\dim_\bQ H^i(\mathsf{C}_n(\bC)^{an};\bQ) = \dim_\bQ H^i(\mathsf{C}_{2i}(\bC)^{an};\bQ) \leq 2^{2i}.$$
This is not a very refined estimate but it is at least independent of $n$. Returning to the Grothendieck--Lefschetz trace formula, and using that $\mathsf{C}_n$ has a single geometrically connected component which is $\bF_q$-rational for any $q$, one finds that
\begin{equation}\label{eq:HomStabOnlyEstimate}
\left| \frac{\# \mathsf{C}_n(\bF_q)}{q^n} - 1 \right| \leq \sum_{i=1}^{\lfloor n/2 \rfloor} q^{-i/2} 2^{2i} + \sum_{i=\lfloor n/2 \rfloor+1}^n q^{-i/2} 2^{2n},
\end{equation}
where the first term is a geometric sum and the second term is less than $(\frac{2^2}{q^{1/4}})^n(n/2)$. Thus, writing
\begin{equation*}
\delta^+(q) := \limsup_{n \to \infty} \frac{\# \mathsf{C}_n(\bF_q)}{q^n} \quad\text{ and }\quad
\delta^-(q) := \liminf_{n \to \infty} \frac{\# \mathsf{C}_n(\bF_q)}{q^n}
\end{equation*}
for the upper and lower densities, as long as $q > 2^8$ one obtains the estimate
$$|\delta^\pm(q) - 1| \leq \frac{2^2}{q^{1/2}-2^2}.$$
This in particular tells us that $\delta^\pm(q) \to 1$ as $q \to \infty$, which does not follow from the estimate using Deligne's bound alone (which would give $\lim_{n\to \infty} \lim_{q \to \infty}$ rather than $\lim_{q\to \infty} \lim_{n \to \infty}$). However, it does not tell us whether $\frac{\# \mathsf{C}_n(\bF_q)}{q^n}$ tends to a limit with $n$ for any particular $q$.

It is this style of argument that will be used to prove Theorem~\ref{thm:Main}.

\subsubsection{The conclusion using stability and stable homology}\label{sec:StabHomArg} If in addition one knows the Stable Homology Theorem then one can replace the first term in the estimate \eqref{eq:HomStabOnlyEstimate} with a correction to the left-hand side, giving instead
\begin{equation}
\left| \frac{\# \mathsf{C}_n(\bF_q)}{q^n} - (1-q^{-1}) \right| \leq  \sum_{i=\lfloor n/2 \rfloor+1}^n q^{-i/2} 2^{2n}
\end{equation}
and so $\delta^\pm(q)=1-q^{-1}$ as long as $q > 2^8$. In particular the upper and lower densities agree, so $\frac{\# \mathsf{C}_n(\bF_q)}{q^n}$ converges as $n \to \infty$ to the limit $1-q^{-1}$ for all large enough $q$.

\subsection{The case of Hurwitz schemes}

Ellenberg, Venkatesh, and Westerland propose to use the above method to prove Statement~\ref{state:PtCountEstimate}, and hence Theorem~\ref{thm:Main}. In this case one should choose a sufficiently large auxiliary prime number $r$ coprime to $q$. Supposing that an appropriate comparison theorem is available, which is established in Proposition 7.7 of~\cite{EVW}, Statement~\ref{state:PtCountEstimate} can be proved by the above method by showing:
\begin{enumerate}
\item That $\mathrm{Tr}(\Frob_q : H^0_{\et}(\mathsf{Hn}_{G,n}^c /\overline{\bF}_q;\bQ_r) \circlearrowleft) = 1$, or in other words that $\mathsf{Hn}_{G,n}^c$ has just one geometrically connected component which is $\bF_q$-rational. (As $\mathsf{Hn}_{G,n}^c$ has many connected components this is of course not possible for all $q$. It is here that the assumption that $\ell$ does not divide $q-1$ enters: all connected components but one contain $\ell$-th roots of unity in their field of definition.\footnote{This means that the argument explained here implies that the Cohen--Lenstra heuristic is not valid for function fields $\bF_q(t)$ when $\bF_q$ contains $\ell$-th roots of unity, but on the other hand it also explains how it should be corrected. This correction has been implemented in some cases by Garton~\cite{Garton}. That a correction to the Cohen--Lenstra heuristic in the number field case may be necessary in the presence of $\ell$-th roots of unity had been observed earlier by Malle~\cite{MalleCorrect}.}) I will not discuss this point any further.

\item That there are constants $C_0$ and $C_1$, depending only on the abelian group~$A$, such that
\begin{equation*}
\dim_{\bQ} H^i(\mathsf{Hn}_{G,n}^c(\bC)^{an};\bQ) \leq C_0 \cdot C_1^i
\end{equation*}
for all large enough $n$.
\end{enumerate}

In turn, the second item would follow if there were a Complexity Theorem of the form $\dim_{\bQ} H^i(\mathsf{Hn}_{G,n}^c(\bC)^{an};\bQ) \leq D_0^n$, for a constant $D_0$ depending only on the abelian group $A$, and a Homological Stability Theorem of the following form.

\begin{statement}\label{state:ReqHomStab}
There are constants $E_0$, $E_1$, and $N$, depending only on the abelian group $A$, such that
\begin{equation*}
H_d(\mathsf{Hn}_{G,n-N}^c(\bC)^{an};\bQ) \cong H_d(\mathsf{Hn}_{G,n}^c(\bC)^{an};\bQ)
\end{equation*}
for all $n \geq E_0 + E_1 \cdot d$.
\end{statement}

The space $\mathsf{Hn}_{G,n}^c(\bC)^{an}$ parametrises connected branched $G$-covers of $\mathbb{C}$ with $n$ branch points, the monodromy around each of which lies in the class $c \subset G$. Recording the set of $n$ distinct branch points gives a map
\begin{equation}\label{eq:HurwitzAsCover}
\pi: \mathsf{Hn}_{G,n}^c(\bC)^{an} \lra \mathsf{C}_n(\bC)^{an},
\end{equation}
and the fibre over a point $\{x_1, x_2, \ldots, x_n\} \in \mathsf{C}_n(\bC)^{an}$ is given by the set of isomorphism classes of connected principal $G$-bundles over $\bC \setminus \{x_1, x_2, \ldots, x_n\}$ whose monodromy around each $x_i$ lies in $c$; in particular this map is a covering space, and one may try to identify it. I mentioned above that the fundamental group of $\mathsf{C}_n(\bC)^{an}$ is Artin's braid group on $n$ strands, $\beta_n$. This braid group acts on $c^n = c \times c \times \cdots \times c$ by
\begin{equation}\label{eq:BraidAct}
\sigma_i * (g_1, g_2, \ldots, g_n) = (g_1, \ldots, g_{i-1}, g_ig_{i+1}g_i^{-1}, g_i,  g_{i+2}, \ldots, g_n),
\end{equation}
where $\sigma_i$ is the elementary braid that carries the $i$-th strand over the $(i+1)$-st. If  $c' \subset c^n$ denotes the subset of tuples which collectively generate $G$, and $G\backslash c'$ denotes the quotient of this set by the action of $G$ by conjugation of all elements, then the covering space \eqref{eq:HurwitzAsCover} may be identified as that given by the $\beta_n$-set $G\backslash c'$. From Theorem~\ref{thm:BraidComplexity} it follows that $\mathsf{Hn}_{G,n}^c(\bC)^{an}$ admits a cell structure with $2^n \cdot |G \backslash c'| \leq (2|c|)^n$ cells, providing the required Complexity Theorem in this case.

In the rest of this exposition I will explain the proof of Statement~\ref{state:ReqHomStab}.

\section{Spaces of marked branched covers}

Ellenberg, Venkatesh, and Westerland introduce the following topological model for Hurwitz spaces.

\begin{defi}
For $n>0$, $G$ a group, and $c \subset G$ a conjugation-invariant subset, let $\Hur_{G,n}^c$ denote the set of tuples $(t, \xi, f)$ where 
\begin{enumerate}
\item $t \in (0,\infty)$, 
\item $\xi$ is a configuration of $n$ distinct unordered points in $(0,t) \times (0,1)$, and 
\item $f : [0,t] \times [0,1] \setminus \xi \to BG$ is a continuous map sending $\sqcup := (\{0,t\} \times [0,1]) \cup ([0,t] \times \{0\})$ to the basepoint, and sending a small loop around each point of $\xi$ to a loop in $BG$ representing an element of $c \subset G$.
\end{enumerate}

Let $\CHur_{G,n}^c \subset \Hur_{G,n}^c$ denote the subset of those $(t,\xi,f)$ such that $f$ is $\pi_1$-surjective.
\end{defi}

One thinks of the data $(\xi, f)$ as describing a branched cover of $[0,t] \times [0,1]$, with branch points $\xi$ and monodromy given by $f$. There is a forgetful map 
\begin{equation}\label{eq:TopHurwitzAsCover}
\pi : \Hur_{G,n}^c \lra \mathrm{Conf}_n
\end{equation}
to the space of pairs $(t,\xi)$ as above, and a homotopy equivalence $\mathrm{Conf}_n \simeq \mathsf{C}_n(\bC)^{an}$. 

\begin{minipage}[t]{0.35\textwidth}
\vspace{0ex}
  \centering
  \includegraphics[width=\textwidth]{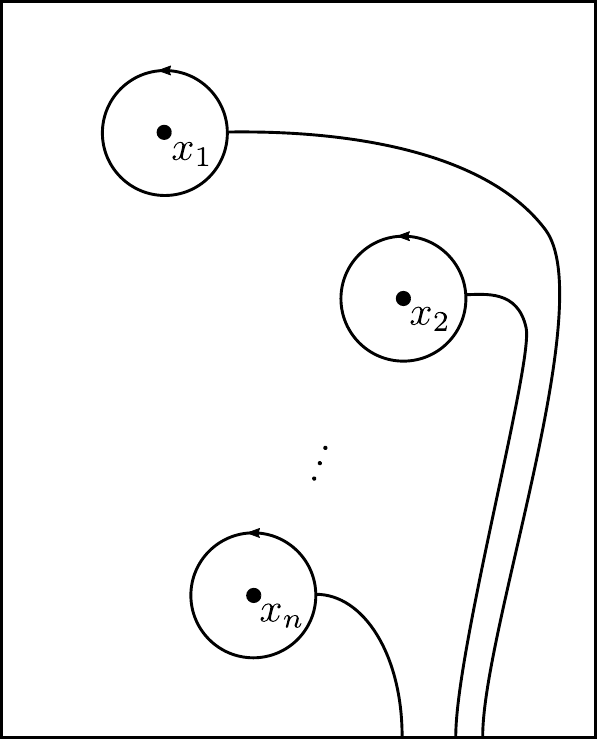}
\captionof{figure}{}
\label{fig:stdcurves} 
\end{minipage}\hfill
\begin{minipage}[t]{0.6\textwidth}
\hspace{2ex} The space $\Hur_{G,n}^c$ can be topologised so that the map $\pi$ is a fibration, whose fibre over $(t,\xi)$ is the space of maps
$$f : ([0,t] \times [0,1] \setminus \xi, \sqcup) \lra (BG, pt)$$
sending a small loop around each point of $\xi$ to a loop in the conjugation-invariant subset $c$. Recording the monodromy around $n$ loops based on $\sqcup$ as shown in Figure~\ref{fig:stdcurves} gives a homotopy equivalence between this mapping space and $c^n$. Thus, recalling that $\mathrm{Conf}_n \simeq \mathsf{C}_n(\bC)^{an}$ is an Eilenberg--MacLane space for the braid group $\beta_n$, the map \eqref{eq:TopHurwitzAsCover} may be identified up to homotopy with the Borel construction
\[c^n \hcoker \beta_n := c^n \times_{\beta_n} E\beta_n \lra B\beta_n,\]
with action as in \eqref{eq:BraidAct}.
\end{minipage}

\subsection{Relation to Hurwitz spaces}

If $c$ is a single conjugacy class and $c' \subset c^n$ denotes those tuples which generate $G$, then the above identification restricts to a homotopy equivalence between $\CHur_{G,n}^c$ and the homotopy orbit space $c' \hcoker \beta_n$. In the last section $\mathsf{Hn}_{G,n}^c(\bC)^{an}$ was identified with $(G\backslash c') \hcoker \beta_n$. To relate that with the spaces described here, note that the action of $G$ on itself by conjugation induces an action on $BG$, and hence an action on $\Hur_{G,n}^c$ by postcomposition of the maps $f$. Preferring to work with homotopy quotients, this gives a map
$$G \backslash\!\!\backslash \CHur_{G,n}^c \simeq G \backslash\!\!\backslash c' \hcoker \beta_n \lra G \backslash c' \hcoker \beta_n \simeq \mathsf{Hn}_{G,n}^c(\bC)^{an}$$
which induces an isomorphism on rational homology (as $G \backslash\!\!\backslash c'  \to G \backslash c' $ does, because the stabilisers of this action are finite groups).

Thus in order to prove Statement~\ref{state:ReqHomStab} it will be enough to prove the following, which is the second part of Theorem 6.1 of~\cite{EVW} (I will also prove the first part along the way). Recall that $A$ is a finite abelian $\ell$-group, $G = A \rtimes \bZ^\times$ where $\bZ^\times$ acts by inversion, and $c$ is the conjugacy class of all involutions in $G$.

\begin{theo}\label{thm:ReqHomStab2}
There are constants $E_0$, $E_1$, and $N$, depending only on $A$, such that there are $G$-equivariant isomorphisms
\begin{equation*}
H_d(\CHur_{G,n-N}^c;\bQ) \cong H_d(\CHur_{G,n}^c;\bQ)
\end{equation*}
for all $n \geq E_0 + E_1 \cdot d$.
\end{theo}

Statement~\ref{state:ReqHomStab} is immediately deduced from this by taking $G$-coinvariants.

\subsection{Multiplicative structure}\label{sec:MultStr}

For the discussion so far I could have worked with a simpler model of $\Hur_{G,n}^c$ given by setting $t=1$ everywhere. The advantage of allowing $t$ to vary is that it provides
$$\Hur_{G}^c := \coprod_{n \geq 0} \Hur_{G,n}^c$$
 with the structure of a topological monoid, where I declare $\Hur_{G,0}^c = \{1\}$ to be the unit and define the multiplication between terms with $n>0$ via the formula
$$(t, \xi, f) \cdot (t', \xi', f') = (t+t', \xi \amalg (\xi' + (t,0)), f'')$$
where
$$f''(x, y) = \begin{cases}
f(x, y) & \text{ if } 0 \leq x \leq t,\\
f'(x-t, y) & \text{ if } t \leq x \leq t+t'.
\end{cases}$$

\subsection{The ring of components}\label{sec:Components}

A consequence of this multiplicative structure is that for a field $k$ one can form a graded $k$-algebra $R = \bigoplus_{n \geq 0} R_n$ with $R_n := H_0(\Hur_{G,n}^c;k)$. By the homotopy equivalence $\Hur_{G,n}^c \simeq c^n \hcoker \beta_n$ which I have discussed, the vector space $R_n$ has a basis given by the quotient set $c^n/\beta_n$. I will write such equivalence classes as tuples $[g_1, g_2, \ldots, g_n]$, bearing in mind that for any $i$ the symbol $[g_1, \ldots, g_{i-1}, g_{i} g_{i+1} g_{i}^{-1}, g_i, g_{i+2}, \ldots g_n]$ represents the same element. Multiplication in the ring $R$ is given by concatenation of such tuples.

The \emph{global monodromy} of an element $[g_1, g_2, \ldots, g_n] \in
c^n/\beta_n$ is the product $g_1 g_2 \cdots g_n$ in~$G$, which is well-defined. If a tuple $[g_1, g_2, \ldots, g_n]$ has trivial global monodromy, then by applying elements of the braid group one sees that
$$[g_1, g_2, \ldots, g_n, h_1, \ldots, h_m] = [h_1, \ldots, h_m, g_1, g_2, \ldots, g_n]$$
for any $[h_1, \ldots, h_m] \in c^m/\beta_m$. In particular, a tuple $[g_1, g_2, \ldots, g_n]$ with trivial global monodromy lies in the centre of the graded ring $R$.

The goal of this section is to establish a strong structural result about the ring $R$, assuming the following property.

\begin{defi}\label{def:NonSplitting}
A pair $(G,c)$ of a group $G$ and a conjugacy class $c \subset G$ has the \emph{non-splitting property} if $c$ generates $G$ and if for each $H \leq G$ the set $c \cap H$ is either empty or is a conjugacy class of $H$.
\end{defi}

For $G = A \rtimes \bZ^\times$, with $A$ an abelian group of odd order and $\bZ^\times$ acting by inversion, and $c$~being the conjugacy class of all involutions in $G$, the pair $(G,c)$ has the non-splitting property.

\begin{prop}[Lemma 3.5 of~\cite{EVW}]\label{prop:DefU}
Suppose that $(G,c)$ has the non-splitting property and $|G|$ is a unit in the field $k$. Then there are natural numbers $N_0 \geq N$ and a $U \in R_N$ such that the maps
\begin{align*}
U \cdot - : R_{n-N} &\lra R_n\\
- \cdot U : R_{n-N} &\lra R_{n}
\end{align*}
are isomorphisms for all $n \geq N_0$.
\end{prop}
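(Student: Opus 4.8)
The plan is to produce the element $U$ by a pigeonhole/stabilisation argument on the monoid of components, exploiting the non-splitting property to control which components appear. First I would analyse $R$ combinatorially: a basis of $R_n$ is $c^n/\beta_n$, and multiplication is concatenation. Given a tuple $[g_1,\dots,g_n]$, let $H=\langle g_1,\dots,g_n\rangle\leq G$ be the subgroup it generates; this is a braid-invariant and hence well-defined invariant of the component, and it is multiplicative in the sense that the subgroup generated by a product of two tuples is the subgroup generated by their union. The non-splitting property says $c\cap H$ is a single $H$-conjugacy class (when nonempty), which is exactly the hypothesis that makes braid orbits on tuples generating a fixed $H$ behave well. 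I would invoke the classical fact (a Conway--Parker / Fried--Völklein type statement, which underlies \cite{EVW}) that \emph{for $m$ large, the braid orbits on the set of length-$m$ tuples in $(c\cap H)^m$ that generate $H$ and have a fixed global monodromy are in bijection with a quotient of the Schur multiplier of $H$ (or rather the relevant reduced version)}; the point is that the number of such orbits stabilises and, crucially, that concatenating with a suitable fixed tuple acts invertibly once $m$ is large.

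Concretely, here is the construction of $U$. For each subgroup $H\leq G$ with $c\cap H\neq\emptyset$, the Conway--Parker stabilisation yields an integer $m_H$ and a tuple generating $H$ with trivial global monodromy whose class, call it $u_H\in R_{m_H}$, acts invertibly (by concatenation) on the span of all components with subgroup invariant equal to $H$, once the length exceeds some $n_H$. Since tuples with trivial global monodromy are central in $R$ (as noted in the excerpt), and since a product of central idempotent-like stabilisers can be assembled, I would set $U:=\prod_{H}u_H^{e_H}$ for suitable exponents $e_H$ (taken large and equal, to clear all subgroups simultaneously), living in degree $N:=\sum_H e_H m_H$. Because $G$ is finite there are finitely many such $H$, so this is a finite product; and because each factor has trivial global monodromy, $U$ is central, so $U\cdot-$ and $-\cdot U$ agree. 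It remains to check that $U\cdot-\colon R_{n-N}\to R_n$ is an isomorphism for $n\geq N_0$ for a suitable $N_0$.

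For that, I would filter $R_n$ by the subgroup invariant $H$ (a filtration by two-sided ideals, ordered by inclusion of subgroups), and argue on the associated graded. On the $H$-graded piece, multiplication by $U$ acts through $u_H^{e_H}$ up to lower-filtration terms (the other factors $u_{H'}$ either kill the piece or act invertibly on it), and the Conway--Parker result says precisely that $u_H$-concatenation is a bijection on length-$m$ generating tuples for $H$ once $m\geq n_H$ — one direction is surjectivity of the stabilisation map and the other is the "no new orbits, no merging of orbits" part of the theorem. Taking $N_0:=\max_H n_H + N$ and using $|G|$ invertible in $k$ only insofar as it is needed to make the earlier homotopy-equivalence $\Hur_{G,n}^c\simeq c^n\hcoker\beta_n$ and the identification of $H_0$ valid on the nose, one concludes that $U\cdot-$ is an isomorphism on each graded piece, hence on $R_n$, for $n\geq N_0$.

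\textbf{Main obstacle.} The combinatorial heart — and the step I expect to be genuinely hard — is the Conway--Parker/stabilisation input: proving that, for $H$ generated by $c\cap H$ with $c\cap H$ a single $H$-class, the set of braid orbits on long generating tuples of fixed global monodromy is "eventually constant" and that concatenation by the right central tuple $u_H$ is eventually bijective. This is where the non-splitting property is indispensable (it forbids the braid action from splitting off spurious invariants beyond the subgroup and the global monodromy), and it is the technical core of \cite{EVW}'s Section~3; everything else (centrality, finiteness of the product, filtration argument) is formal bookkeeping around it. A secondary subtlety is bookkeeping the exponents $e_H$ so that a single $U$ clears \emph{all} subgroups at once without the factors interfering — handled by taking all $e_H$ equal to a common large value and using centrality so the order of multiplication is irrelevant.
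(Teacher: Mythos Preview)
Your construction of $U$ as a \emph{product} $\prod_H u_H^{e_H}$ does not work. Each $u_H$ is a tuple generating $H$, so the concatenation $U$ is a single tuple generating $\langle \bigcup_H H\rangle = G$. Hence $U\cdot[g_1,\dots,g_{n-N}]$ always generates $G$, regardless of what $[g_1,\dots,g_{n-N}]$ generated; the image of $U\cdot -$ lies entirely in the $G$-piece and misses every component of $R_n$ whose tuple generates a proper subgroup (and such components exist for every $n$, e.g.\ $[g]^n$). In your filtration language: if any factor $u_{H'}$ with $H'\not\subseteq H$ ``kills'' the $H$-graded piece, then the whole product kills it, so the associated graded of $U\cdot-$ is zero on every piece except the top one. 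A product of stabilisers cannot do the job.

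The paper instead takes $U$ to be a \emph{sum}: $U_D=\sum_{g\in c}[g]^{|g|D}$. On the $H$-piece of the associated graded, the summands with $g\notin H$ go to higher filtration and contribute zero, while the summands with $g\in c\cap H$ \emph{all induce the same bijection} $S_{n-|g|D}(H)\to S_n(H)$ once $D$ and $n$ are large --- proving this ``independence of $g$'' is the real work, done by a pigeonhole on the permutations $([g_1]^{|g_1|}\cdot-)\circ([g_2]^{|g_2|}\cdot-)^{-1}$. Thus $U\cdot-$ acts on the $H$-piece as $|c\cap H|$ times an isomorphism, and \emph{this} is exactly where the hypothesis that $|G|\in k^\times$ enters (since $|c\cap H|$ divides $|G|$); it has nothing to do with the identification of $H_0$, which holds over any coefficient ring. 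Finally, the Conway--Parker input needed is lighter than what you invoke: one only needs that any long generating tuple for $H$ can be braided to begin with a prescribed $g\in c\cap H$ with the remaining entries still generating $H$ (Lemma~\ref{lem:FV}); no Schur-multiplier classification of orbits is required.
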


The main tool for proving this proposition is the following result of Conway and Parker, and Fried and V{\"o}lklein~\cite{FV}. A proof is included as Proposition 3.4 of~\cite{EVW}.

\begin{lemm}\label{lem:FV}
Let $G$ be a finite group, $c$ a conjugacy class of $G$, and $g \in c$. For sufficiently large $n$, every $n$-tuple $(g_1, g_2, \ldots, g_n) \in c^n$ of elements which generate $G$ is equivalent under the action of $\beta_n$ to a tuple $(g, g'_2, \ldots, g'_n)$ where $g'_2, \ldots, g'_n$ generate $G$.
\end{lemm}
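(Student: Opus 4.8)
\emph{Plan.} This is the Conway--Parker/Fried--Völklein stabilization, and I would prove it along the classical lines; below I indicate the steps.

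\emph{Reductions.} The braid action \eqref{eq:BraidAct} preserves both the subgroup generated by the entries of a tuple and the global monodromy $g_1g_2\cdots g_n\in G$. Hence it suffices to show: for $n$ larger than some threshold $N=N(G,c)$, the $\beta_n$-orbit of every generating tuple in $c^n$ contains a tuple whose first entry is $g$ and whose remaining $n-1$ entries still generate $G$. I will also record three elementary facts about \eqref{eq:BraidAct}: (i) the ``cyclic shift'' $\sigma_{n-1}\sigma_{n-2}\cdots\sigma_1$ sends $(g_1,\dots,g_n)$ to $(g_1g_2g_1^{-1},\dots,g_1g_ng_1^{-1},g_1)$, so one can move the first entry to the last position unchanged at the cost of conjugating all the others \emph{simultaneously} by one element (and the inverse move brings the last entry to the front); (ii) since $\sigma_i$ replaces $(g_i,g_{i+1})$ by $(g_ig_{i+1}g_i^{-1},g_i)$, which generate the same subgroup, braiding a block of consecutive entries among themselves never changes the subgroup they generate; (iii) $\sigma_1^{-2}$ replaces the first entry $g_1$ by $g_2^{-1}g_1g_2$, so after first shuttling various entries into the second slot one can replace the first entry of a tuple by $wg_1w^{-1}$ for any $w$ in the subgroup generated by the other entries.

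\emph{Induction on $n$.} Suppose first that, after some braiding, we have a tuple $(g_1,\dots,g_n)$ with $\langle g_2,\dots,g_n\rangle=G$. If $n-1\geq N$, the inductive hypothesis applied to the generating $(n-1)$-tuple $(g_2,\dots,g_n)$ lets us braid the last $n-1$ entries into the form in which $g_2=g$ and $\langle g_3,\dots,g_n\rangle=G$; then a single $\sigma_1^{-1}$ produces $(g,\,g^{-1}g_1g,\,g_3,\dots,g_n)$, whose first entry is $g$ and whose last $n-1$ entries contain the generating set $g_3,\dots,g_n$. Thus the whole statement reduces to the assertion that, for $n\gg 0$, the $\beta_n$-orbit of any generating tuple contains a tuple with $\langle g_2,\dots,g_n\rangle=G$, plus a direct check for the finitely many small values of $n$ that seed the induction.

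\emph{The core step and the main obstacle.} It remains to show that a generating tuple $(g_1,\dots,g_n)$, with $n$ large, is $\beta_n$-equivalent to one whose last $n-1$ entries generate $G$. This is the heart of Conway--Parker, and it is precisely here that largeness of $n$ is essential: one argues by downward induction on the index $[G:\langle g_2,\dots,g_n\rangle]$. If $H:=\langle g_2,\dots,g_n\rangle$ is proper then $g_1\notin H$, and because $n$ is large there are many entries to work with; using moves (i)--(iii) one produces, among the entries, an element of $c$ outside $H$ and rearranges so that the subgroup generated by all-but-one entry strictly increases. Iterating reaches $\langle g_2,\dots,g_n\rangle=G$. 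Proving that this process terminates --- i.e. that one cannot get stuck cycling among conjugate proper subgroups --- is the genuinely delicate point, and is essentially the Conway--Parker stabilization itself (the obstruction to collapsing orbits is controlled by a reduced Schur multiplier, but it does not obstruct the weaker statement we need). I expect this to be the main obstacle; alternatively one can simply invoke the Conway--Parker/Fried--Völklein theorem (proved as Proposition~3.4 of~\cite{EVW}) and note that ``first entry prescribed, remaining entries generating'' is one of its standard consequences once $n$ is large enough.
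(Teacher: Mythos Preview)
The paper does not actually prove this lemma: it attributes the result to Conway--Parker and Fried--V\"olklein and refers the reader to Proposition~3.4 of~\cite{EVW} for a proof. So there is nothing to compare your argument against here, and your final suggestion --- to simply invoke that reference --- is exactly what the exposition does.

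That said, a few remarks on your sketch. Your elementary facts (i)--(iii) are correct and useful, and your identification of the ``core step'' (for $n$ large, any generating tuple is $\beta_n$-equivalent to one whose last $n-1$ entries already generate $G$) is exactly right: this is the substance of the Conway--Parker argument, and you are honest that you are not proving it. One structural comment: your induction on $n$ is redundant. Once you know the core step for a given $n$, facts (ii) and (iii) already finish the job directly --- since $\langle g_2,\dots,g_n\rangle=G$ and $g_1,g$ lie in the same conjugacy class, there is $w\in\langle g_2,\dots,g_n\rangle$ with $wg_1w^{-1}=g$, and by (iii) you can realise this conjugation by braids while by (ii) the subgroup generated by the remaining entries is unchanged. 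So the inductive appeal to the $(n-1)$-case, and hence the ``seed'' you worry about, are unnecessary.

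Your outline of the core step (downward induction on $[G:\langle g_2,\dots,g_n\rangle]$) is the right shape, but as you acknowledge, showing termination is where all the content lies; without that, what you have written is a reduction to the cited literature rather than an independent proof.
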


\begin{proof}[Proof of Proposition~\ref{prop:DefU}]
Write $|g|$ for the order of $g \in G$. For a natural number $D$ one may form the element
$$U_D := \sum_{g \in c} [g]^{|g|D}$$ 
in the graded ring $R$, having grading $|g|D$. The element $[g]^{|g|}$ has trivial global monodromy so lies in the centre of $R$, and hence $U_D$ does too. The element $U$ will be $U_D$ for a large enough $D$, so in fact the left- and right-multiplication maps will be equal.

For a subgroup $H \leq G$, let $S_n(H) \subset c^n/\beta_n$ denote the subset of those $[g_1, \ldots, g_n]$ such that the group generated by $\{g_1, \ldots, g_n\}$ is $H$.

\vspace{1ex}

\noindent\textbf{Claim.} There is a $D \geq 0$ and a $n_0 \geq |g|D$ such that for every $n \geq n_0$, every $H \leq G$, and every $g \in c \cap H$, the function
$$[g]^{|g|D} \cdot - : S_{n-|g|D}(H) \lra S_n(H)$$
is a bijection, and this bijection is independent of $g$.

\begin{proof}[Proof of Claim]
First I will establish the claim for $D=1$ and without the ``independent of $g$" clause. If $c \cap H = \emptyset$ there is nothing to show; otherwise $c \cap H$ is a conjugacy class in $H$, so one can apply Lemma~\ref{lem:FV} to $(H, c \cap H)$, showing that $[g] \cdot - : S_{n-1}(H) \to S_n(H)$ is surjective for all large enough $n$. Iterating this, $[g]^{|g|} \cdot - : S_{n-|g|}(H) \to S_n(H)$ is surjective for all large enough $n$. As these are finite sets, it is in fact bijective for all large enough $n$.

I will now address the ``independent of $g$" clause. For $g_1, g_2 \in G$ and all large enough~$n$ the maps $[g_1]^{|g_1|} \cdot -$ and $[g_2]^{|g_2|} \cdot -$ are both bijections, so there are permutations
$$([g_1]^{|g_1|} \cdot -) \circ ([g_2]^{|g_2|} \cdot -)^{-1} : S_n(H) \lra S_n(H)$$
for all large enough $n$ (which are compatible with respect to $[g_1] \cdot - : S_n(H) \to S_{n+1}(H)$). If $D^H_{g_1, g_2}$ denotes the lowest common multiple of the orders of these permutations for all large enough $n$, which is finite by the compatibility property, then because $[g_i]^{|g_i|}$ commutes with any $[h_1, \ldots, h_m]$ it follows that 
$$[g_1]^{|g_1|D^H_{g_1,g_2}} \cdot - =[g_2]^{|g_2|D^H_{g_1,g_2}} \cdot - : S_{n-|g|D^H_{g_1, g_2}}(H) \lra S_n(H)$$
for all large enough $n$. Taking $D$ to be the product of $D_{g_1, g_2}^H$ over all $H \leq G$ and $g_1, g_2 \in c \cap H$ then has the required property.
\end{proof}

Choose $U=U_D$ with the $D$ provided by this claim. Filter the graded vector space $R$ by the subspaces $R^{\geq m}$ spanned by those tuples $[g_1, \ldots, g_n]$ which generate a subgroup of $G$ of order at least $m$. Multiplication by $U$ preserves this filtration, so induces a map after taking associated graded. One has
$$(R^{\geq m}/R^{\geq m+1})_{n-|g|D} = \bigoplus_{\substack{H \leq G \\ |H|=m}} k\{S_{n-|g|D}(H)\}.$$
Multiplication by $[g]^{|g|D}$ on the term $k\{S_{n-|g|D}(H)\}$ is 0 if $g \not\in H$, as then $g$ together with $H$ generate a group of order strictly larger than $m$. For all $g \in c \cap H$ and all large enough $n$ multiplication by $[g]^{|g|D}$ induces \emph{the same} isomorphism to $k\{S_{n}(H)\}$. Thus for all large enough $n$ the map
$$U \cdot - : (R^{\geq m}/R^{\geq m+1})_{n-|g|D} \lra (R^{\geq m}/R^{\geq m+1})_{n}$$
induces $|c \cap H|$ times an isomorphism, which is an isomorphism as $|c \cap H|$ divides $|G|$ so is a unit in $k$ by assumption. As the filtration of $R$ by $R^{\geq m}$'s is finite, the result follows.
\end{proof}

\section{Aside: Group-completion and delooping}

The material in this section does not appear in the work of Ellenberg, Venkatesh, and Westerland, though it is related to parts of their withdrawn preprint~\cite{EVW2} and Corollary~\ref{cor:StabHom} is proved there by different means. However, I find the point of view taken here clarifying, and it has informed the exposition I am giving of their results. I discovered the essential idea of Section~\ref{sec:delooping} in 2009, when Westerland sent me a draft of~\cite{EVW}. 

\subsection{Calculus of fractions and group-completion}

The multiplicative structure on $\Hur_G^c$ is certainly not commutative, even up to homotopy. One way to see this is to note that sending $(t,\xi,f)$ to the homotopy class of the restriction of $f$ to $[0,t] \times \{1\}$ defines a monoid homomorphism
$$\mu : \Hur_G^c \lra G,$$
with image the subgroup of $G$ generated by $c$, which need not be commutative. Now $\Hur_G^c$ is equipped with a left $G$-action, called $*$, induced by the conjugation action of $G$ on itself, and if the codomain of $\mu$ is considered as the left $G$-space $G^{ad}$ then the map $\mu$ is $G$-equivariant. This provides the data of a ``$G$-crossed space", cf.\ Remark~\ref{rem:GCrossedSpace} for this notion and the proof of the following lemma.

\begin{lemm}\label{lem:CalcFrac}
The maps $\Hur_G^c \times \Hur_G^c \to \Hur_G^c$ given by
\begin{align*}
(a,b) &\longmapsto a \cdot b\\
(a, b) &\longmapsto (\mu(a) *b) \cdot a
\end{align*}
are homotopic.
\end{lemm}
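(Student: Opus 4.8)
The plan is to exhibit an explicit concordance between the two maps, using the freedom to rescale the $t$-coordinate and slide configurations around. Write $a = (t, \xi, f)$ and $b = (t', \xi', f')$. The product $a \cdot b$ places $b$ to the right of $a$ in the strip $[0, t+t'] \times [0,1]$. For the second map, $\mu(a) * b$ is the element $(t', \xi', g \cdot f')$ where $g \in G$ is the image of the restriction of $f$ to $[0,t] \times \{1\}$ and $g \cdot f'$ denotes postcomposition with the conjugation-by-$g$ action on $BG$; then $(\mu(a) * b) \cdot a$ places this conjugated copy of $b$ to the \emph{left} of $a$. So the two maps differ by "moving $b$ past $a$ from right to left, conjugating its monodromy by the global monodromy of $a$ along the top edge as it goes." Geometrically this is exactly a braiding move, and the point of allowing $t$ to vary is that there is room to perform it continuously.

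The key steps, in order, would be: (1) Make precise the $G$-action $*$ on the mapping-space fibres and check that $(t',\xi', g\cdot f')$ is again a valid element of $\Hur_G^c$ — the conjugate of a loop representing an element of $c$ still represents an element of $c$ since $c$ is conjugation-invariant, and $\sqcup$ still goes to the basepoint since conjugation by $g$ fixes it. (2) Construct the homotopy: parametrise by $s \in [0,1]$ a family of elements of $\Hur_{G}^c$ in which the support of (the image of) $b$ is dragged from the rightmost sub-rectangle $[t, t+t'] \times [0,1]$ across to the leftmost one $[0, t'] \times [0,1]$, say along a path that goes up over the top edge; simultaneously the monodromy data carried by $b$'s region gets twisted by the path the top edge traces, which accumulates to conjugation by $\mu(a)$ at $s=1$. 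The cleanest way to package this is via the "$G$-crossed space" formalism referred to in Remark~\ref{rem:GCrossedSpace}: a $G$-crossed space is a $G$-space $X$ with an equivariant monoid map $\mu\colon X \to G^{ad}$, and such data is exactly what makes $X\hcoker(\text{--})$ into a braided (indeed $E_2$-like) structure; the lemma is then the statement that the two ways of composing $a$ and $b$ coincide up to the canonical braiding homotopy. (3) Verify that the homotopy is well-defined and continuous in $(a,b)$ jointly — this is where one uses that the model has $t \in (0,\infty)$ rather than $t$ fixed, so that the intermediate configurations never collide.

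The main obstacle, I expect, is step (2): writing down the homotopy honestly rather than gesturing at the picture. One has to keep track of the branched-cover data $f''$ on the complement of the (moving) configuration, and in particular specify how the monodromy of $b$'s branch points interpolates between "untwisted, sitting on the right" and "twisted by $\mu(a)$, sitting on the left" while remaining a bona fide continuous map to $BG$ sending small loops into $c$ throughout. The cleanest route is to first treat the case $n(b) = 1$, where $b$ is a single branch point with monodromy $h \in c$ and the move is visibly a half-twist accompanied by $h \mapsto \mu(a)h\mu(a)^{-1}$; then deduce the general case by the monoid structure, writing $b$ as a product of one-point pieces and noting that $\mu(a)$ acts diagonally. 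Alternatively, one invokes the $G$-crossed structure abstractly: once it is set up, the homotopy is formal, and the real content is checking that $(\Hur_G^c, \mu, *)$ genuinely is a $G$-crossed space, which reduces to the compatibilities already implicit in Section~\ref{sec:MultStr}.
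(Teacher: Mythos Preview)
Your proposal is correct, and your second route---packaging the statement via the $G$-crossed space structure so that the homotopy becomes the braiding---is exactly the paper's proof: in Remark~\ref{rem:GCrossedSpace} the author observes that $\Hur_G^c \simeq E_2(c)$ as $E_1$-algebras in the braided monoidal category $\mathsf{Top}^G/G^{ad}$, whence the lemma is the braiding $b_{X,Y}(x,y) = (\mu_X(x)*y, x)$ applied to the $E_2$-structure.

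Your first route (an explicit isotopy of configurations) is a valid alternative and is the down-to-earth content underlying the abstract statement; one small imprecision is the phrase ``over the top edge''---the branch points stay inside the open rectangle, so what you really do is braid the points of $b$ over those of $a$ \emph{within} the strip, and the accumulated conjugation by $\mu(a)$ comes from the braid action~\eqref{eq:BraidAct} rather than from crossing the boundary. With that adjustment the one-point case followed by the monoid decomposition works as you describe.
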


This lemma implies that the localisation $H_*(\Hur_{G}^c;k)[\pi_0(\Hur_{G}^c)^{-1}]$ may be constructed by right fractions. Thus the Group-Completion Theorem, in the form proved by Quillen~\cite[Theorem Q.4]{FM}, applies and yields a ring isomorphism
$$H_*(\Hur_{G}^c;k)[\pi_0 (\Hur_{G}^c)^{-1}] \cong H_*(\Omega B \Hur_{G}^c;k)$$
with the homology of the homotopical group-completion $\Omega B \Hur_{G}^c$.

\subsection{The delooping of $\Hur_{G}^c$ is the rack space of $c$}\label{sec:delooping}

There is a well-understood principle for producing models for deloopings of geometric monoids such as $\Hur_{G}^c$, originated by Segal~\cite{Segal}, which in this case identifies $B\Hur_{G}^c$ up to homotopy equivalence with the space $L$ of pairs $(\xi, f)$ consisting of 
\begin{enumerate}
\item a finite subset $\xi \subset \bR \times (0,1)$, and
\item a continuous map $f : (\bR \times [0,1] \setminus \xi, \bR \times \{0\}) \to (BG, *)$ sending a small loop around each point of $\xi$ to a loop in the conjugacy class $c$.
\end{enumerate}
However, the topology on this space is not what one first thinks! Rather, it is given a topology of convergence on compact subsets of $\bR \times [0,1]$, meaning that points of $\xi$ are allowed to move towards $\pm \infty$ in the $\bR$-direction and then vanish. In particular the cardinality of $\xi$ is not locally constant on this space.

Inside $L$ is the subspace $L^{red}$ consisting of those pairs $(\xi, f)$ for which the projection map $\xi \to [0,1]$ is injective, i.e.\ such that each line $\bR \times \{s\}$ contains at most one point of $\xi$.

\begin{lemm}
The inclusion $L^{red} \to L$ is a weak homotopy equivalence.
\end{lemm}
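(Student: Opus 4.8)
The plan is to exhibit a deformation retraction, or at least a filtered system of maps that squeezes $L$ onto $L^{red}$, by spreading out the points of $\xi$ in the $[0,1]$-direction. Concretely, I would first observe that since $L$ is topologised by convergence on compact subsets of $\bR \times [0,1]$, it is the colimit (in a suitable homotopical sense) of the subspaces $L_{\leq R}$ of pairs $(\xi, f)$ with $\xi \subset [-R,R] \times (0,1)$; it suffices to prove the statement on each $L_{\leq R}$ and pass to the colimit, since weak equivalences are closed under filtered colimits along closed inclusions. On $L_{\leq R}$ the cardinality of $\xi$ is bounded, so the projection $\xi \to (0,1)$ fails to be injective only on a codimension-$\geq 1$ locus, and one expects to push points apart.

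The key construction I would carry out is the following. Given $(\xi, f) \in L$ with $\xi = \{p_1, \dots, p_k\}$ and second coordinates $s_1, \dots, s_k \in (0,1)$, I want to choose new second coordinates $s_1', \dots, s_k'$ that are pairwise distinct, varying continuously with $(\xi,f)$, and agreeing with the old ones when they are already distinct; then I drag the points along vertical lines from $s_i$ to $s_i'$ and carry $f$ along by the obvious isotopy extension. The subtlety is that there is no continuous way to make finitely many real numbers distinct (that would split the configuration space of unordered points), so instead I would use the $\bR$-direction: first apply a continuous process that, using the ``vanishing at $\pm\infty$'' feature of the topology, moves each point $p_i$ horizontally to a position $x_i'$ so that the $x_i'$ are distinct and ordered compatibly with some auxiliary data, and only then shear the second coordinates. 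In fact the cleanest route is to not retract $L$ onto $L^{red}$ directly but to show both inclusions $L^{red} \hookrightarrow L$ and (say) a further subspace with points on distinct vertical \emph{and} horizontal lines are weak equivalences, by building explicit homotopies through an intermediate space; this mirrors the standard Segal-style scanning arguments and the rack-space literature (Fenn--Rourke--Sanderson), where exactly such reshuffling lemmas appear.

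Alternatively, and perhaps more robustly, I would use a Quillen-type criterion: filter $L$ by the open subspaces $U_j$ where ``at most $j$ of the lines $\bR \times \{s\}$ meet $\xi$ in more than one point is $\leq j$''—no, cleaner: filter by $U_j = \{(\xi,f) : |\xi| \leq j\}$, note each $U_j \setminus U_{j-1}$ fibres (via $(\xi,f) \mapsto$ the unordered set of second coordinates) over $\mathrm{Conf}_j(0,1)$ minus nothing, compare with the analogous stratification of $L^{red}$, and use that the relevant pair of spaces of configurations—$j$ points in $\bR\times(0,1)$ versus $j$ points in $\bR\times(0,1)$ with distinct heights—have the same weak homotopy type because the latter is the total space of a fibre bundle over $\mathrm{Conf}_j(0,1)$ with fibres products of open intervals (contractible), while the former deformation retracts onto it. Then an induction up the filtration, using that the inclusions $U_{j-1}\hookrightarrow U_j$ are cofibrations and the quotients match, gives the weak equivalence. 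I would phrase the final comparison via the gluing lemma for homotopy pushouts.

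The main obstacle I anticipate is making the ``spread the heights apart'' homotopy genuinely continuous and compatible with the map $f$ — i.e.\ producing an ambient isotopy of $\bR \times [0,1]$ (fixing $\bR \times \{0\}$) that moves $\xi$ to a configuration with distinct heights, depending continuously on $(\xi,f)$, without ever colliding points. The resolution is that one is allowed to first send points off to horizontal infinity one at a time (so their vertical coordinates become irrelevant), and the ordering in which to do this can be made continuous by exploiting the linear order on the $\bR$-coordinate together with the fact that on each $L_{\leq R}$ only finitely many points are in play; then $f$ is transported by the corresponding homeomorphisms of the complement, which are well-defined since homeomorphisms of $\bR \times [0,1] \setminus \xi$ act on the mapping space. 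I would keep the write-up at the level of ``standard Segal-type argument'' and cite the rack-space literature for the detailed isotopies rather than reproducing them.
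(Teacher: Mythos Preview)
Your proposal circles around the right idea but never lands on it, and the two concrete approaches you outline both have genuine gaps.

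First, the claim that $L$ is a (homotopy) colimit of the subspaces $L_{\leq R}$ is not correct: the whole point of the scanning topology is that a sequence of configurations with a point escaping to infinity converges to the configuration with that point deleted, so $L$ carries a coarser topology than the colimit topology. Restricting to $L_{\leq R}$ also does not simplify the problem, since two points with equal height and small first coordinate can occur just as well in $L_{\leq 1}$ as in $L$.

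Second, and more seriously, the stratification argument you sketch is based on a false claim. You assert that $\mathrm{Conf}_j(\bR \times (0,1))$ deformation retracts onto the subspace of configurations with distinct second coordinates. But the latter fibres over the contractible space $\mathrm{Conf}_j((0,1))$ with fibre $\bR^j$, so it is contractible, whereas $\mathrm{Conf}_j(\bR \times (0,1)) \simeq B\beta_j$ is not contractible for $j \geq 2$. The open strata of $L^{red}$ and $L$ are therefore \emph{not} homotopy equivalent, and no stratum-by-stratum comparison can succeed. (This is exactly why the subsequent step in the paper, identifying $L^{red}$ with the rack space $Bc$, is not vacuous: it is the passage from $L$ to $L^{red}$ that replaces the braid-group strata by the contractible cubes of $Bc$.)

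The paper's argument is much simpler than anything you propose, and crucially it does \emph{not} preserve the number of branch points. Given a compact $K \subset L$, one observes that there is a uniform $\epsilon > 0$ such that for every $(\xi,f) \in K$ the strip $(-\epsilon,\epsilon) \times (0,1)$ contains at most one point of $\xi$ at each height (this follows from compactness, since otherwise one would extract a subsequence with two distinct points of $\xi_n$ colliding, which the topology on $L$ forbids). One then simply stretches $(-\epsilon,\epsilon)$ out to all of $\bR$; the points outside the strip are pushed off to $\pm\infty$ and vanish, and what remains lies in $L^{red}$. This deformation visibly preserves $L^{red}$. You gesture at ``sending points off to horizontal infinity'' late in your proposal, but the clean move is a single global stretch, not a sequential procedure requiring a continuous choice of ordering.
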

\begin{proof}
For each compact subset $K$ of $L$ there is an $\epsilon>0$ so that each interval $(-\epsilon,\epsilon) \times {s}$ contains at most one point of $\xi$. The subset $K$ then deforms into $L^{red}$ by stretching $(-\epsilon, \epsilon)$ to $\bR$, pushing off to $\pm \infty$ any points of $\xi$ outside $(-\epsilon,\epsilon) \times [0,1]$ and forgetting the map $f$ outside this region. This deformation preserves the subspace~$L^{red}$.
\end{proof}

(This argument might seem suspect at first; it is the unusual topology on $L$ which allows it. This kind of argument is often known as ``scanning".)

The space $L^{red}$ has a stratification by number of branch points. The $n$-th open stratum $S_n L^{red}$ has an evident map $S_n L^{red} \to \bR^n \times c^n$ given by recording the first coordinates of the $n$ branch points as well as their monodromies taken around the $n$ loops indicated in Figure~\ref{fig:stdcurves}. Taken together these yield a map $f: L^{red} \to |c^\bullet|$ to the geometric realisation of the (semi-)cubical space (also known as a $\square$-space) having $c^n$ as its set of $n$-cubes, and having face maps
\begin{equation}\label{eq:FaceMaps}
\begin{aligned}
d_i^0(g_1, g_2, \ldots, g_n) &= (g_1, g_2, \ldots, g_{i-1}, g_{i+1}, \ldots, g_n)\\
d_i^1(g_1, g_2, \ldots, g_n) &= (g_i^{-1} g_1 g_i, g_i^{-1} g_2 g_i, \ldots,  g_i^{-1} g_{i-1} g_i, g_{i+1}, \ldots, g_n).
\end{aligned}
\end{equation}
The map $f$ is easily seen to be a weak homotopy equivalence, by induction over strata. The geometric realisation $|c^\bullet|$ is precisely the \emph{rack space} $Bc$ of the rack\footnote{Recall that a \emph{rack} is a set $X$ with a binary operation $(a,b) \mapsto a^b$ for which each $(-)^b$ is an automorphism of sets-with-a-binary-operation. Ours is the prototypical example: a conjugacy class of some group, acting on itself by conjugation.} $c$, as defined by Fenn, Rourke, and Sanderson~\cite{FRSTrunk}.

\begin{prop}
There is a weak equivalence $B\Hur_{G}^c \simeq Bc$.
\end{prop}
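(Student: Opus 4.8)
The plan is to assemble the chain of weak equivalences already prepared in this section. The monoid $\Hur_{G}^c$ is, by the discussion preceding Lemma~\ref{lem:CalcFrac}, a well-behaved topological monoid of ``configuration type'', and Segal's group-completion/delooping machinery produces a model for its classifying space $B\Hur_{G}^c$ as a scanning space, namely the space $L$ of pairs $(\xi,f)$ with the topology of convergence on compact subsets of $\bR \times [0,1]$. First I would make this identification precise: $B\Hur_{G}^c$ is the realisation of the bar construction $[p] \mapsto (\Hur_{G}^c)^{\times p}$, and allowing the parameter $t$ to vary lets one glue the products of strips $[0,t_1]\times[0,1], \ldots, [0,t_p]\times[0,1]$ end-to-end into a single strip $[0,t_1+\cdots+t_p]\times[0,1]$; passing to the colimit over $p$ and rescaling $[0,\sum t_i]$ to all of $\bR$ yields exactly the space $L$, with the branch points allowed to escape to $\pm\infty$ precisely because degenerate simplices in the bar construction are those where some $t_i \to 0$. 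This is the content of ``$B\Hur_{G}^c \simeq L$'', and it is the step where one must be a little careful about the non-locally-constant cardinality of $\xi$.

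Next I would invoke the two lemmas just proved: the inclusion $L^{red} \hookrightarrow L$ is a weak homotopy equivalence (the scanning deformation), and the map $f : L^{red} \to |c^\bullet| = Bc$ recording first coordinates and monodromies is a weak homotopy equivalence, proved by induction over the stratification of $L^{red}$ by number of branch points (on each open stratum $S_n L^{red} \to \bR^n \times c^n$ is a fibration with contractible fibres, the fibre being a space of maps on a strip-minus-$n$-points that deformation retracts onto the chosen loops, and the face maps of the $\square$-space $c^\bullet$ are exactly the identifications \eqref{eq:FaceMaps} coming from letting a branch point cross the $s$-coordinate of an adjacent one or escape to infinity). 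Composing,
\[
B\Hur_{G}^c \simeq L \xleftarrow{\ \sim\ } L^{red} \xrightarrow[\ \sim\ ]{\ f\ } Bc,
\]
which is the asserted weak equivalence.

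I expect the main obstacle to be the first step, the rigorous identification of $B\Hur_{G}^c$ with the scanning space $L$. The bar construction $B\Hur_{G}^c$ and the space $L$ are manifestly ``the same'' at the level of underlying sets once one allows varying $t$, but matching the topologies requires the standard but slightly delicate argument that for a monoid built from configurations-with-labels the classifying space admits a ``scanning'' model in which simplices are concatenated and degeneracies correspond to points vanishing at infinity; this is precisely Segal's principle (and the reason the unusual topology on $L$ is forced). One should either cite the relevant form of this machinery or check directly that the evident map from the fat realisation of the bar construction to $L$ is a quasifibration over each stratum and hence a weak equivalence. By contrast the two weak-equivalence lemmas are already in hand, so once the delooping model is pinned down the proposition follows formally by concatenating the displayed zig-zag.
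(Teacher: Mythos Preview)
Your proposal is correct and follows essentially the same route as the paper: the proposition is deduced by concatenating the zig-zag $B\Hur_{G}^c \simeq L \xleftarrow{\sim} L^{red} \xrightarrow{\sim} |c^\bullet| = Bc$, with the first identification coming from Segal's scanning principle and the latter two being the lemmas already established in the section. You have also correctly located the only delicate point, namely the identification of the bar construction with the scanning model $L$; the paper handles this by appeal to Segal's machinery rather than spelling it out, just as you suggest.
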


In particular, the fundamental group of $B\Hur_{G}^c$ is the universal enveloping group of the conjugacy class $c$, and the homology of $B\Hur_{G}^c$ is the rack homology of $c$.

The following was proved by other means in~\cite{EVW2}.

\begin{coro}\label{cor:StabHom}
If $c$ is a conjugacy class which generates $G$, then each path component of $\Omega B\Hur_{G}^c$ has the rational homology of $S^1$.
\end{coro}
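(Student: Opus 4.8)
The plan is to combine the two facts just established --- the group-completion isomorphism $H_*(\Omega B\Hur_G^c;\bQ)\cong H_*(\Hur_G^c;\bQ)[\pi_0(\Hur_G^c)^{-1}]$ and the identification $B\Hur_G^c\simeq Bc$ --- with a comparison to configuration spaces. Since $\Omega B\Hur_G^c$ is a grouplike $H$-space its path components are all homotopy equivalent, so it suffices to compute the rational homology of the unit component, or equivalently to show that the universal cover of the rack space $Bc$ has the rational homology of $\bC P^\infty$. (For orientation: for the one-element rack $\{*\}$ the cubical chain complex computing rack homology has vanishing differential, since the two families of face maps~\eqref{eq:FaceMaps} coincide; so $B\{*\}$ has the integral homology of $\bC P^\infty\times S^1$, and $\coprod_{n}\mathrm{Conf}_n$ --- which deloops to $B\{*\}$ --- group-completes to $\bZ\times\Omega^2_0S^2$, each component of which has the rational homology of $S^1$ by~\eqref{eq:HomologyBraid}.)

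Next I would produce the comparison map. The rack inclusion $\{*\}\hookrightarrow c$, $*\mapsto g$ --- which makes sense for any $g\in c$ because $\{g\}$ is closed under conjugation by itself --- and the collapse $c\to\{*\}$ induce monoid maps $\coprod_{n}\mathrm{Conf}_n\to\Hur_G^c\to\coprod_{n}\mathrm{Conf}_n$ with composite the identity (concretely: the constant tuple $(g,\dots,g)$ is fixed by the braid action~\eqref{eq:BraidAct}, and the collapse forgets the map $f$). On group completions, $\Omega B\coprod_{n}\mathrm{Conf}_n$ becomes a retract of $\Omega B\Hur_G^c$; hence, using~\eqref{eq:HomologyBraid}, the rational homology of each component of $\Omega B\Hur_G^c$ contains that of $S^1$ as a direct summand, and it remains only to show that the complementary summand vanishes --- equivalently, that the collapse map $\Omega B\Hur_G^c\to\Omega B\coprod_{n}\mathrm{Conf}_n$ is a rational homology isomorphism on each component.

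To attack this I would work with the presentation $H_*(\Hur_G^c;\bQ)[\pi_0^{-1}]$ of the source. Filter $\bQ[c^n]$ (the coefficient system of the fibration $\pi$) by the subgroup of $G$ that a tuple generates; the braid action preserves this, and multiplication by the class of any tuple generating $G$ moves everything into the top piece, so after inverting the components the only surviving contribution comes from tuples generating $G$. That contribution is assembled, via Shapiro's lemma, from the homology of the finite coverings $B\,\mathrm{Stab}_{\beta_n}(g)\to B\beta_n$ indexed by generating tuples $g$; and by the Conway--Parker/Fried--V{\"o}lklein lemma (Lemma~\ref{lem:FV}), used exactly as in the proof of Proposition~\ref{prop:DefU}, the set of these coverings stabilises and the stabilisation maps between them become bijections. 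So the required statement reduces to: for a generating tuple $g$, the inclusion $\mathrm{Stab}_{\beta_n}(g)\hookrightarrow\beta_n$ is a rational homology isomorphism in the limit $n\to\infty$ --- that is, each path component of $\CHur_{G,n}^c$ has the stable rational homology of $S^1$.

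This reduction is, I expect, the whole difficulty: under $B\Hur_G^c\simeq Bc$ it is a restatement of the corollary itself, asserting that the rational rack homology of $c$ with its canonical local coefficients is that of $\bC P^\infty$, and the retraction above supplies only the inequality in one direction. I would close the gap either by invoking the identification of the stable homology of the Hurwitz spaces $\CHur_{G,n}^c$ (proved by different methods in~\cite{EVW},~\cite{EVW2}), or --- since the reduced statement is purely about the rack $c$ --- by a direct computation of the rational cohomology of the rack space of a connected rack in the style of Etingof--Gra{\~n}a. The merit of the delooping $B\Hur_G^c\simeq Bc$ is precisely that it makes this reduction visible; that is the sense in which the corollary follows from the proposition.
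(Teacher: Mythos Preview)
Your setup via the retraction $B\{*\}\to Bc\to B\{*\}$ is correct and matches what the paper does (the paper uses the averaged section $1\mapsto\tfrac{1}{|c|}\sum_{g\in c} g$ on braided vector spaces rather than your $*\mapsto g$, but either works). However, your attempt to close the remaining gap is, as you yourself acknowledge, circular: the filtration by generated subgroup and Lemma~\ref{lem:FV} reduce the claim to the assertion that each component of $\CHur_{G,n}^c$ has the stable rational homology of $S^1$, which is equivalent to the corollary you are proving. Invoking~\cite{EVW2} would be legitimate but is precisely what this corollary is meant to reprove by the present method.

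The paper closes the gap in one stroke using the result you name at the end but never actually apply. Etingof and Gra\~na~\cite{EG} show that for a finite rack $X$ one has $\dim_\bQ H_d(BX;\bQ)=m^d$, where $m$ is the number of orbits of $X$ acting on itself. When $c$ is a conjugacy class generating $G$ the rack $c$ acts transitively on itself, so $m=1$ and $\dim_\bQ H_d(Bc;\bQ)=1$ for every $d$. Your retraction already shows $\dim_\bQ H_d(Bc;\bQ)\geq\dim_\bQ H_d(B\{*\};\bQ)=1$, so the collapse $Bc\to B\{*\}$ is forced to be a rational homology isomorphism; looping then gives the conclusion. The missing idea is simply to invoke this dimension count---no passage through $\CHur$, the group-completion formula, or stabiliser subgroups of braid groups is needed.
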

\begin{proof}
I first claim that the morphism of racks $\pi: c \to \{*\}$ induces an isomorphism on rational rack homology. Rack homology of $X$ over $k$ only makes use of the free $k$-module on $X$, $k\{X\}$, with its structure of a braided vector space. The map $1 \mapsto \frac{1}{|c|}\sum_{g \in c} g : \bQ \to \bQ\{c\}$ is a morphism of braided vector spaces which splits $\bQ\{\pi\} : \bQ\{c\} \to \bQ$, so $\pi$ is split surjective on rational rack homology.

On the other hand Etingof and Gra{\~n}a~\cite{EG} have shown that the dimension of the $d$-th rational rack homology is precisely $m^d$, where $m$ is the number of orbits of the rack acting on itself. If $c$ is a conjugacy class which generates $G$ then its action on itself has a single orbit, so its rational rack homology is 1-dimensional in each degree: by the above, $\pi$ must then induce an isomorphism on rational rack homology.

Writing $\mathrm{Conf} := \coprod_{n \geq 0} \mathrm{Conf}_n$ for the monoid made out of the configuration spaces as in \eqref{eq:TopHurwitzAsCover}, its delooping is given by the rack space of the trivial rack. Thus the map $B\Hur_G^c \to B\mathrm{Conf}$ is a rational homology equivalence, so $\Omega B\Hur_G^c \to \Omega B\mathrm{Conf} \simeq \Omega^2 S^2$ is a rational homology equivalence on each path-component. Finally, each path component of $\Omega^2 S^2$ has the rational homology of $S^1$.
\end{proof}

\section{$A$-homology and its Koszul complex}\label{sec:Koszul}

In this section and the rest of this exposition I will pass from working in topology to working in higher algebra, more specifically working with algebras and modules in the category of chain complexes. The topological monoid structure on $\Hur_G^c$ constructed in Section~\ref{sec:MultStr} endows its complex of singular $k$-chains
\begin{equation}\label{eq:DefA}
{A} := C_*(\Hur_{G}^c;k) = \bigoplus_{n \geq 0} C_*(\Hur_{G,n}^c;k)
\end{equation}
with the structure of a differential graded algebra (dga). I will consider it as an augmented dga with augmentation $\epsilon : A \to k$ given by projection to the summand $C_*(\Hur_{G,0}^c;k)$ followed by the canonical augmentation of this based chain complex. In this way $k$ has the structure of an $A$-bimodule.

Furthermore, the dga $A$ comes with an additional $\bN$-grading, given by the direct sum decomposition \eqref{eq:DefA}. From now on I will usually work in the category $\mathsf{Ch}_k^\bN$ of chain complexes of $k$-modules equipped with an additional $\bN$-grading, which formally means the category of functors $\mathsf{Fun}(\bN, \mathsf{Ch}_k)$. I will form tensor products in the graded sense. For an object $X \in \mathsf{Ch}_k^\bN$ I will write $X_n$ for the chain complex of grading $n$. For homology, I will write
$$H_{n,d}(X) := H_d(X_n).$$
Thus $H_{n,d}(A) = H_d(\Hur_{G,n}^c;k)$, and the $\bN$-graded dga structure on $A$ make these homology groups into a bigraded ring. In particular, the graded ring I have been denoting by $R$ is simply $H_{*,0}(A)$.

\begin{defi}
For a graded left $A$-module $M$ the \emph{$A$-module homology} $H^A_{n,d}(M)$ is the homology of the derived tensor product $k \otimes^\bL_A M$. More explicitly, it is the homology of the two-sided bar construction $B(k,A,M)$ formed in the category $\mathsf{Ch}_k^\bN$.
\end{defi}

The goal of this section is to develop a tool for the efficient calculation of $A$-module homology, by calculating the Koszul dual of $(A, \epsilon)$. By this I mean the dg coalgebra given by the two-sided bar construction $B(k,A,k)$ where $k$ is considered as a left or right $A$-module via $\epsilon$. The result is remarkably simple.

\begin{theo}\label{thm:KosDual}
One has
$$H_{n,d}(B(k,A,k)) = \begin{cases}
k\{c\}^{\otimes n} & \text{ if $n=d$}\\
0 & \text{ otherwise}.
\end{cases}$$
As a coalgebra this is the quantum shuffle coalgebra on the braided vector space $k\{c\}$.
\end{theo}
\begin{proof}
Write $B_\epsilon(A) = B(k,A,k)$ where $k$ is considered as an $A$-module via $\epsilon : A \to k$. Let $\epsilon_{geom} : A \to k$ be the alternative augmentation induced by the map of spaces $\Hur_{G}^c \to *$. If $A$ is filtered by its $\bN$-grading and $k$ is given the constant filtration then $\epsilon_{geom}$ is a filtered map, and its associated graded is $\epsilon : A \to k$. Thus there is a corresponding filtration of $B_{\epsilon_{geom}}(A)$ with associated graded $B_\epsilon(A)$.

Now $B_{\epsilon_{geom}} (A) \simeq C_*(B\Hur_{G}^c;k)$, and the filtration in question is induced by the filtration of the classifying space $B\Hur_{G}^c$ by number of branch points. The equivalences $B\Hur_{G}^c \simeq L^{red} \simeq Bc$ from Section~\ref{sec:delooping} are of filtered spaces when $Bc$ is given the filtration by cubical skeleta, so the associated graded of $B_{\epsilon_{geom}}(A)$ in grading $n$ is a pointed space homotopy equivalent to $S^n \wedge c^n_+$. The homology calculation follows.

The coproduct is given by the map on associated graded induced by the Serre diagonal of the (semi-)cubical set $Bc$, namely
$$\Delta(-) = \sum_{\substack{p+q=n \\ \sigma \in \mathrm{Sh}_{p,q}}} {\mathrm{sign}(\sigma)} \, d^0_{\sigma(1)} \cdots d^0_{\sigma(p)}(-) \otimes d^1_{\sigma(p+1)} \cdots d^1_{\sigma(p+q)}(-),$$
where the sum is taken over all decompositions $p+q = n$ and all $(p,q)$-shuffles. Using the formulas \eqref{eq:FaceMaps} for the face maps of this (semi-)cubical set, this gives the quantum shuffle coproduct for the braided vector space $k\{c\}$.
\end{proof}

A graded left $A$-module $M$ has a canonical filtration by $A$-submodules given by its grading, and the associated graded is isomorphic to $M$ as a graded chain complex, but the $A$-module structure is now that induced via the augmentation $\epsilon : A \to k$. Thus there is a corresponding filtration of $B(k,A,M)$ with associated graded $B(k,A,k) \otimes_k M$, and so a spectral sequence
\begin{equation}
E^1_{n,p,q}(M) = k\{c\}^{\otimes n+q} \otimes H_{-q,p-n}(M) \Longrightarrow H^A_{n,p+q}(M),
\end{equation}
with differentials $d^r : E^r_{n,p,q} \to E^r_{n,p+(r-1), q-r}$. As $M$ is an $A$-module each graded vector space $H_{*,d}(M)$ is a module over the graded ring $R=H_{*,0}(A)$ and as $H_{1,0}(A) = k\{c\}$ one in particular obtains action maps $k\{c\} \otimes H_{n-1,d}(M) \to H_{n,d}(M)$. By an analysis similar to the second part of Theorem~\ref{thm:KosDual} the $d^1$-differential of this spectral sequence can be identified in terms of this structure as
$$d^1(g_1 \otimes \cdots \otimes g_q \otimes [m]) = \sum_{i=1}^q (-1)^i g_1 \otimes \cdots \otimes g_{i-1} \otimes g_{i+1} \otimes \cdots \otimes g_q \otimes (g_i)^{g_{i+1} \cdots g_q} \cdot [m].$$
However, I shall not need to make use of this explicit formula.

If $M$ is \emph{discrete} (i.e.\ supported in homological degree 0) then an $A$-module structure on $M$ is the same as an $R$-module structure on $M$, induced along the quotient map $A \to H_{*,0}(A)=R$. In this case the above spectral sequence only has a $d^1$-differential and hence $H^A_{n, d}(M) \cong  E^2_{n,n,d-n}(M)$. Ellenberg, Venkatesh, and Westerland discover the chain complex $(E^1_{*,*,*}(M), d^1)$ differently to the way I have done so here, and they denote it $\mathcal{K}_{*,*}(M)$ and call it a ``Koszul-like complex"; from the point of view taken here it is precisely the Koszul complex for computing the $A$-module homology of a discrete $A$-module $M$.

\begin{rema}
The dual calculation to that of Theorem~\ref{thm:KosDual} has been made by Ellenberg, Tran, and Westerland~\cite{ETW}, who show that the $\mathrm{Ext}$-algebra of the quantum shuffle algebra on $k\{c\}$ agrees with $H_{*,*}(A)$.
\end{rema}

\begin{rema}\label{rem:GCrossedSpace}
There is another perspective on Theorem~\ref{thm:KosDual} which may be clarifying, involving the notion of ``$G$-crossed spaces", cf.~\cite[Section 4.2]{FreydYetter}. Consider the category $\mathsf{Top}^G/G^{ad}$ of $G$-spaces $X$ equipped with a $G$-equivariant map $\mu_X : X \to G^{ad}$. This category has a monoidal structure in which $(X, \mu_X) \otimes (Y, \mu_Y)$ is given by the $G$-space $X \times Y$ with the reference map
$$X \times Y \overset{\mu_X \times \mu_Y}\lra G^{ad} \times G^{ad} \overset{\cdot}\lra G^{ad}.$$
This monoidality admits a braiding via the formula
\begin{align*}
b_{X,Y} : X \times Y &\lra Y \times X\\
(x,y) &\longmapsto (\mu_X(x) * y , x).
\end{align*}

A conjugacy class $c \subset G$ defines an object $c$ of $\mathsf{Top}^G/G^{ad}$, with $G$-action given by conjugation and $\mu_c$ given by the inclusion $c \subset G^{ad}$. It is then almost tautological, given that $\Hur_{G,n}^c \simeq c^n \hcoker \beta_n$, that there is an equivalence of $E_1$-algebras
$$\Hur_{G}^c \simeq E_2(c),$$
where the free $E_2$-algebra on $c$ is formed in the braided monoidal category $\mathsf{Top}^G/G^{ad}$. This incidentally provides the proof of Lemma~\ref{lem:CalcFrac}.

From this point of view Theorem~\ref{thm:KosDual} simply records the well-known fact that the bar construction of an augmented free $E_k$-algebra on an object $X$ is the free $E_{k-1}$-algebra on the suspension of $X$ (taken in the category of pointed objects): in this case giving the free $E_1$-algebra on $S^1 \wedge c_+$.
\end{rema}

\section{The Regularity Theorem for $A$-homology}

In this section I will describe the most technical, and certainly most surprising, step in Ellenberg, Venkatesh, and Westerland's argument. From the point of view I am taking here, of modules over the dga $A$ and $A$-homology, the result can be interpreted as asserting that discrete $A$-modules which are finitely presented have finite Castelnuovo--Mumford regularity (in a slightly non-standard sense, in which a grading is scaled).

In this section, for a $\bN$-graded $k$-module $V$ I will write
$$\deg V := \inf\{k \in \bN \cup \{\infty\} \, | \, V_n=0 \text{ for all } n > k\}.$$
Recall that for a left $A$-module $M$ I defined $H_{n,d}^A(M) = H_{n,d}(k \otimes^\bL_A M)$, and write $h^A_d(M) := \deg H_{*,d}^A(M)$. The regularity theorem is then as follows, where as mentioned in the last section I consider $R$-modules as discrete $A$-modules via the quotient map $A \to H_{*,0}(A)=R$.

\begin{theo}[Theorem 4.2 of~\cite{EVW}]\label{thm:RegularityA}
Suppose that $(G,c)$ is non-splitting, as in Definition~\ref{def:NonSplitting}. Then there is a constant $B_0$ depending on $(G,c)$ such that for any left $R$-module $M$ one has
$$h^A_d(M) \leq \max(h^A_0(M), h^A_1(M)) + B_0 \cdot (d-1)$$
for all $d \geq 1$.
\end{theo}

To prove Theorem~\ref{thm:RegularityA}, Ellenberg, Venkatesh, and Westerland first consider the analogous regularity question for the graded ring $R$ instead of the graded dga $A$. To state this, for a left $R$-module $M$ let me write $H^R_{n,d}(M) := H_{n,d}(k \otimes^\bL_R M)$, and $h^R_d(M) := \deg H_{*,d}^R(M)$.

\begin{prop}[Proposition 4.10 of~\cite{EVW}]\label{prop:RegularityR}
Suppose that $(G,c)$ is non-splitting. Then there is a constant $B_1$ depending on $(G,c)$ such that for any left $R$-module $M$ one has
$$h^R_d(M) \leq   \max(h^R_0(M), h^R_1(M)) + B_1 \cdot (d-1).$$
for all $d \geq 1$.
\end{prop}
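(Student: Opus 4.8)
The plan is to reduce Proposition~\ref{prop:RegularityR} to a statement in a single homological degree, and then to prove that statement by induction on $|G|$.

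\emph{Reduction.} Since $H^R_{*,*}(-)$ commutes with filtered colimits, and any module $M$ with $h^R_0(M),h^R_1(M)<\infty$ is the filtered colimit of finitely generated modules $N$ with $h^R_0(N)\le h^R_0(M)$ and $h^R_1(N)\le h^R_1(M)$ --- realise $N$ as $F/\langle T\rangle$ with $F$ free on finitely many of the given generators of $M$ and $T$ a finite subset of the given relations --- one may assume $M$ finitely generated. The element $U=U_D\in R_N$ of Proposition~\ref{prop:DefU} makes $R$ a finitely generated module over the central polynomial subalgebra $k[U]$: because $U\cdot-:R_{n-N}\to R_n$ is an isomorphism for $n\ge N_0$, both $R/UR$ and $\mathrm{ann}_R(U)$ are finite-dimensional. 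In particular $R$ is Noetherian, so a finitely generated $M$ has a minimal free presentation $0\to K_M\to F_0\to M\to 0$ with $F_0$ and $K_M$ finitely generated; here $K_M$ embeds into the free module $F_0$, and $h^R_{d}(M)=h^R_{d-1}(K_M)$ for $d\ge2$ while $h^R_0(K_M)=h^R_1(M)$. An induction on $d$ now reduces the Proposition to the following single assertion: there is a constant $B_1=B_1(G,c)$ with $h^R_1(K)\le h^R_0(K)+B_1$ for every $R$-module $K$ that embeds into a free $R$-module. (Indeed, granting the Proposition in degree $d-1$, applying it to $K_M$ and bounding $h^R_1(K_M)\le h^R_0(K_M)+B_1=h^R_1(M)+B_1$ gives $h^R_d(M)=h^R_{d-1}(K_M)\le h^R_1(M)+B_1(d-1)$; the case $d=1$ is trivial.) In words: syzygies of submodules of free $R$-modules are generated only boundedly higher than the submodule itself.

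\emph{Proof of the single-degree statement.} Here I would induct on $|G|$, using the non-splitting hypothesis. Filter $R$ and the ambient free module by order of generated subgroup, as in the proof of Proposition~\ref{prop:DefU}: the top two-sided ideal $I_G\subseteq R$ is spanned by tuples generating all of $G$, and the quotient $R/I_G$ is spanned by tuples generating proper subgroups. For a proper subgroup $H$ generated by elements of $c$, the pair $(H,c\cap H)$ is again non-splitting --- this is exactly Definition~\ref{def:NonSplitting} --- with $|H|<|G|$, so the inductive hypothesis applies to the Hurwitz ring $R_{(H,c\cap H)}$, and the ``proper-subgroup'' part of the problem is organised through the natural unital graded ring maps $R_{(H,c\cap H)}\to R$ induced by $c\cap H\subseteq c$. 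For the remaining ``full-monodromy'' part, the Claim inside the proof of Proposition~\ref{prop:DefU} together with the Conway--Parker/Fried--V{\"o}lklein Lemma~\ref{lem:FV} shows that $U\cdot-:(I_G)_{n-N}\to(I_G)_n$ is an isomorphism for all large $n$, so that $I_G$ is, up to a finite-dimensional correction, a finitely generated free $k[U]$-module, whence its relations over $R$ are generated in bounded degree. Feeding these two inputs into the long exact sequences in $H^R_{*,*}(-)$ coming from $0\to I_GF\to F\to F/I_GF\to0$, and the analogous sequences for $K$, together with the finiteness of the subgroup lattice of $G$, produces the constant $B_1$.

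\emph{The main obstacle.} Keeping the constant $B_1$ uniform is the crux. The ring $R/I_G$ is not itself a Hurwitz ring, so the inductive hypothesis must be transported along the maps $R_{(H,c\cap H)}\to R$, which are neither injective nor surjective, and the estimate must survive passage through all of the subgroup layers and all of the attendant change-of-rings comparisons --- for every submodule of a free module, of every rank --- without the constant deteriorating. Controlling this is precisely the point where Ellenberg, Venkatesh, and Westerland had to invent their ``clever piece of homological algebra'': in the framework of Section~\ref{sec:Koszul} it amounts to proving, by a simultaneous induction on homological degree and on $|G|$, that the Koszul-like complex $(E^1_{*,*,*}(M),d^1)=\cK_{*,*}(M)$ is exact in a suitable range. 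That combined bookkeeping, not any single step, is where the real difficulty lies. (Theorem~\ref{thm:RegularityA} is then obtained from Proposition~\ref{prop:RegularityR} by feeding it into the Koszul-duality spectral sequence of Section~\ref{sec:Koszul}.)
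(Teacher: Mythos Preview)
Your reduction step is sound --- dimension-shifting via the kernel of a minimal presentation does reduce the proposition to the single inequality $h^R_1(K)\le h^R_0(K)+B_1$ for submodules $K$ of free modules --- but after that point you do not actually prove anything. You sketch an induction on $|G|$ via the subgroup filtration, note that the maps $R_{(H,c\cap H)}\to R$ are awkward, and then explicitly concede that ``keeping the constant $B_1$ uniform is the crux'' and that ``that combined bookkeeping \dots\ is where the real difficulty lies.'' That is a description of an obstacle, not a resolution of it; as written, the single-degree statement remains unproved.

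The paper's argument is entirely different and bypasses all of this. There is no induction on $|G|$, no reduction to submodules of free modules, and no change-of-rings bookkeeping. One simply forms the homotopy cofibre $M\hcoker U := k\otimes^\bL_{k[U]}M \simeq (R\hcoker U)\otimes^\bL_R M$ and filters $R\hcoker U$ by its $\bN$-grading. The resulting spectral sequence
\[
E^1_{n,p,q}=\bigoplus_{a+b=p+q} H_{-q,a}(R\hcoker U)\otimes H^R_{n+q,b}(M)\ \Longrightarrow\ H_{n,p+q}(M\hcoker U)
\]
converges to something concentrated in homological degrees $0$ and $1$, because $M\hcoker U$ is a cofibre of discrete objects. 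The key input is that Proposition~\ref{prop:DefU} forces $H_{*,*}(R\hcoker U)$ to vanish in grading $\ge N_0$, so the only possible targets for differentials out of $E^1_{n,d,0}\supset H^R_{n,d}(M)$ lie at $E^r_{n,d+r-1,-r}$ with $r<N_0$; these involve $H^R_{n-r,d-1}(M)$ and $H^R_{n-r,d-2}(M)$, which vanish by induction on $d$ once $n>N_0-1+\max(h^R_0(M),h^R_1(M))+B_1(d-2)$. Taking $B_1:=N_0-1$ closes the induction. This works for an \emph{arbitrary} left $R$-module $M$, with no finiteness hypotheses, and the constant $B_1$ drops out automatically from the bound $N_0$ already supplied by Proposition~\ref{prop:DefU}.

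So the ``clever piece of homological algebra'' you are anticipating is not an intricate subgroup induction at all: it is the observation that $U$ being an eventual isomorphism on $R$ means $R\hcoker U$ has bounded grading, and that a single spectral sequence then propagates this bound up through $H^R_{*,d}(M)$ for all $d$.
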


The following argument departs from that given by Ellenberg, Venkatesh, and Westerland, but to me seems considerably simpler.

\begin{proof}
Let $N_0 \geq N$ and $U \in R(N)$ be as in Proposition~\ref{prop:DefU}. Consider the homotopy cofibre (or mapping cone) $R \hcoker U$ of the map\
$$U \cdot - : k[N,0] \otimes R \lra R,$$
where $k[N,0]$ denotes the $\bN$-graded chain complex consisting of $k$ in grading $N$ and homological degree $0$. By Proposition~\ref{prop:DefU} this has $H_{n,d}(R \hcoker U)=0$ for all $n \geq N_0$ and all~$d$. One can consider $R \hcoker U$ as the derived tensor product $k \otimes ^\bL_{k[U]} R$, and can repeat this discussion with the module $M$ instead of $R$ to obtain $M \hcoker U \simeq k \otimes ^\bL_{k[U]} M$. Consider then
$$M \hcoker U \simeq k \otimes ^\bL_{k[U]} M \simeq (k \otimes ^\bL_{k[U]} R) \otimes_R^\bL M$$
and filter $k \otimes ^\bL_{k[U]} R \simeq R\hcoker U$ by its grading, to obtain a spectral sequence
\begin{equation}\label{eq:SSeqMmodU}
E^1_{n,p,q} = \bigoplus_{a+b=p+q} H_{-q, a}(R \hcoker U) \otimes H_{n+q, b}^R(M) \Longrightarrow H_{n,p+q}(M \hcoker U),
\end{equation}
which is depicted  in Figure~\ref{fig:SS} and has differentials $d^r : E^r_{n,p,q} \to E^r_{n, p+r-1, q-r}$. Both $R \hcoker U$ and $M \hcoker U$ have homology concentrated in homological degrees 0 and 1, because they are homotopy cofibres of maps between discrete objects.

\begin{minipage}[t]{0.4\textwidth}
\vspace{0ex}
\centering
  \includegraphics[width=\textwidth]{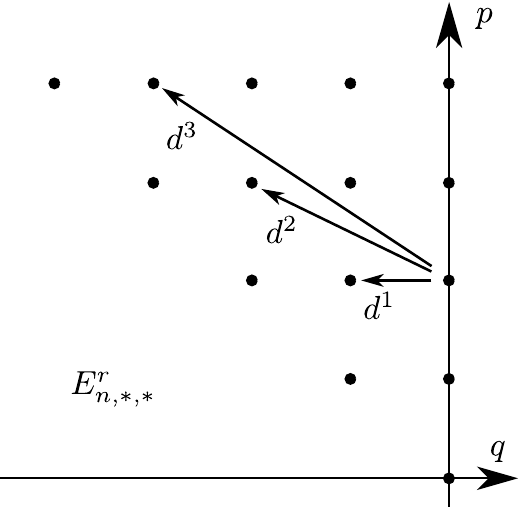}
\captionof{figure}{}
\label{fig:SS} 
\end{minipage}\hfill
\begin{minipage}[t]{0.55\textwidth}
\hspace{2ex} The claim clearly holds for $d=1$. Consider the group $E^1_{n,d,0}$ for $d \geq 2$. If elements of this group survived the spectral sequence then they would contribute to $H_{n,d}(M \hcoker U)$, but this vanishes so for $d \geq 2$ the group $E^1_{n,d,0}$ must die in the spectral sequence. The possible targets of differentials out of this group are $E^r_{n,d+(r-1), -r}$ with $r \geq 1$, as shown in Figure~\ref{fig:SS}. These are subquotients of
\begin{equation*}
\begin{aligned}
&H_{r, 0}(R \hcoker U) \otimes H_{n-r, d-1}^R(M)\\
&\quad\quad\quad \oplus H_{r, 1}(R \hcoker U) \otimes H_{n-r, d-2}^R(M),
\end{aligned}
\end{equation*}
which are zero if $r \geq N_0$ or if $n-r > h_{d-1}^R(M)$ and $n-r > h_{d-2}^R(M)$.  Thus, by induction on $d$,
\end{minipage}
\noindent as long as 
$\displaystyle n > r + \max(h^R_0(M), h^R_1(M)) +  B_1 \cdot (d-2) \text{ for all } r < N_0,$
or in other words as long as
$$n > N_0-1 + \max(h^R_0(M), h^R_1(M)) +  B_1 \cdot (d-2),$$
then there are no possible targets for differentials starting at $E^1_{n,d,0}$, so this group must vanish. But $H_{0,0}(R\hcoker U) \neq 0$, so if $E^1_{n,d,0}$ vanishes then so does $H_{n,d}^R(M)$. Choosing $B_1 := N_0-1$, it follows that $\deg H_{*,d}^R(M) \leq  \max(h^R_0(M), h^R_1(M)) + B_1 \cdot (d-1)$ as required.
\end{proof}

In order to deduce Theorem~\ref{thm:RegularityA} from Proposition~\ref{prop:RegularityR} one requires control of the $A$-homology of $R$, which is provided by the following lemma.

\begin{lemm}[Proposition 4.12 of~\cite{EVW}]\label{lem:Reg}
Suppose that $(G,c)$ is non-splitting. There is a constant $B_2$ depending on $(G,c)$ such that $h_d^A(R) \leq B_2 + d$. Furthermore~$h_0^A(R)=0$.
\end{lemm}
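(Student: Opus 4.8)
I want to compute the $A$-module homology $H^A_{*,*}(R)$ of the discrete $A$-module $R = H_{*,0}(A)$, and show it vanishes above roughly degree $d$ in internal grading. The Koszul-complex description from Section~\ref{sec:Koszul} is the natural tool: since $R$ is discrete, the spectral sequence of that section degenerates and $H^A_{n,d}(R) \cong H_d$ of the Koszul complex $(\mathcal{K}_{*,*}(R), d^1)$, where $\mathcal{K}_{n,d}(R) = k\{c\}^{\otimes n-d} \otimes R_d$ (re-indexing the $E^1$-page so that homological degree in the complex matches $d$), with differential built from the face maps \eqref{eq:FaceMaps} and the $R$-action on itself. Concretely, a class in $H^A_{n,d}(R)$ is represented by sums of tensors $g_1 \otimes \cdots \otimes g_{n-d} \otimes [h_1, \ldots, h_d]$ with $g_i \in c$, and the differential removes one of the first $n-d$ tensor factors and multiplies it (suitably conjugated) into the tuple. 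So I must show this complex is acyclic in internal grading $n > B_2 + d$ for some constant $B_2 = B_2(G,c)$.

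**Key steps.** First, the statement $h_0^A(R) = 0$: $H^A_{*,0}(R) = H_0(k \otimes^{\mathbb L}_A R) = k \otimes_R R = k$, concentrated in internal grading $0$, so $\deg H^A_{*,0}(R) = 0$ — this is immediate. The substantive part is $d \geq 1$. Here the idea is to exploit that $R$ is, by Proposition~\ref{prop:DefU}, "eventually free of rank one over the central polynomial subring $k[U]$": for $n \geq N_0$ the maps $U \cdot -$ are isomorphisms. More usefully, one can stratify by the subgroup generated, $R_n = \bigoplus_{H \leq G} k\{S_n(H)\}$, and the non-splitting hypothesis makes each stratum $\bigoplus_n k\{S_n(H)\}$ behave like the component ring for the pair $(H, c\cap H)$, where $c \cap H$ is a single conjugacy class of $H$. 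I would first treat the "full" stratum $H = G$ — i.e. the quotient $R \to R/R^{<|G|}$, which is $\bigoplus_n k\{S_n(G)\}$ — using the Conway–Parker/Fried–Völklein stabilisation (Lemma~\ref{lem:FV}) to produce explicit contracting homotopies on the Koszul complex in high internal grading: once a generating tuple can be normalised to start with a fixed $g \in c$, one gets a simplicial-type contraction splicing that $g$ off the tuple and into the $k\{c\}^{\otimes}$ part, killing homology above a grading controlled by the Conway–Parker bound. Then I would run an induction downward over the filtration by $|H|$: the associated graded pieces $R^{\geq m}/R^{\geq m+1}$ are direct sums over $|H| = m$ of the component rings of $(H, c \cap H)$ pulled back along $A \to R$, each non-splitting, so each has Koszul complex acyclic in grading $> B_2(H, c\cap H) + d$ by the same argument; taking $B_2 := \max_{H \leq G} B_2(H, c \cap H)$ (a finite max) and assembling via the finite filtration gives the bound for $R$ itself. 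The extra "$+d$" rather than "$+\text{const}$" absorbs the fact that adding the $d$ tuple-slots $[h_1,\ldots,h_d]$ costs $d$ units of grading before the stabilisation range of the $c$-tensors kicks in.

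**The main obstacle.** The delicate point is producing the contracting homotopy on the Koszul complex from Lemma~\ref{lem:FV} and controlling its range uniformly. Lemma~\ref{lem:FV} tells us a generating tuple is $\beta$-equivalent to one beginning with a chosen $g$, but to get a genuine chain homotopy (not just surjectivity of some face map) one needs this compatibly across all tuples and all internal degrees, and one must check the homotopy is well-defined on $\beta$-equivalence classes, i.e. respects the relation $[g_1,\ldots] = [\ldots g_i g_{i+1} g_i^{-1}, g_i, \ldots]$ — this is exactly where the non-splitting property is used, so that $c \cap H$ is a single $H$-conjugacy class and the stabilisation statement applies to every stratum, not just to $G$. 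Managing the bookkeeping so that a single constant $B_2(G,c)$ works simultaneously for all the strata (via the finite maximum over subgroups) is routine once the single-stratum case is in hand, but getting the single-stratum homotopy right, with its explicit degree bound coming out of Conway–Parker, is the crux. I expect the cleanest route is to phrase it as: the map of $A$-modules $R \to k\{c\}^{\otimes \bullet}$-type resolution realising $\mathcal{K}_{*,*}(R)$ is, in high grading, chain-homotopy-equivalent to its truncation, with the homotopy assembled stratum-by-stratum from the Fried–Völklein normal form.
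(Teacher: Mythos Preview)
Your Koszul complex is indexed backwards. The term in position $d$ of the complex computing $H^A_{n,d}(R)$ is $k\{c\}^{\otimes d} \otimes R_{n-d}$, not $k\{c\}^{\otimes n-d} \otimes R_d$; compare the displayed three-term portion in the paper's proof of this lemma, or trace through the spectral sequence of Section~\ref{sec:Koszul} with $M=R$. This is not a cosmetic slip. Your contracting homotopy splices an element off the \emph{tuple} using Conway--Parker normalisation, which requires the tuple to be long; with your indexing the tuple has fixed length $d$, so the homotopy would at best give vanishing for $d$ large, not for $n$ large relative to $d$. With the correct indexing the tuple lives in $R_{n-d}$, and then the bijectivity of $[g]\cdot -$ on each stratum $S_\bullet(H)$ for large argument (extracted from the proof of Proposition~\ref{prop:DefU}) does produce a contracting homotopy once $n-d$ exceeds a constant. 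So your stratify-and-contract plan can be rescued, but you still owe the verification that $ds+sd=\pm\mathrm{id}$ on each associated graded piece $k\{S_\bullet(H)\}$, including the cross-terms where some tensor factor $g_i$ lies outside $H$ (both sides vanish there, using that $g\in H$ so conjugation by $g$ preserves $H$, and the braid relation $[g,h^g]=[h,g]$ handles the case $g_i\in H$). Note also that the associated graded pieces are \emph{not} the full component rings of $(H,c\cap H)$ as you say, only their top strata.

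The paper's argument is completely different and avoids both the stratification and any explicit homotopy. It considers right multiplication $-\cdot U$ on $B(k,A,R)$ and observes two things. First, this map is \emph{nullhomotopic}: by the calculus-of-fractions relation (Lemma~\ref{lem:CalcFrac}), $-\cdot[x]$ on $R$ equals an automorphism of $R$ followed by left multiplication $[x]\cdot -$, and left multiplication by any positive-grading element is simplicially nullhomotopic on the bar construction $B(k,A,-)$ via the usual extra degeneracy. Second, $-\cdot U$ acts only on the $R_{n-d}$ factor in the Koszul complex, where Proposition~\ref{prop:DefU} says it is an \emph{isomorphism} once $n-d-1\geq N_0$. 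A map that is simultaneously zero and an isomorphism forces both source and target to vanish, giving $h^A_d(R)\leq N_0+1+d$ in one stroke. Your approach, once corrected, trades this two-line trick for an explicit homotopy plus a filtration argument; it works but is considerably longer, and it re-proves by hand what the bar-construction nullhomotopy gives for free.
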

\begin{proof}
Let $N_0 \geq N$ and $U \in R(N)$ be as in Proposition~\ref{prop:DefU}. Consider the induced map
$$B(k,A, -\cdot U) : k[N,0] \otimes B(k,A,R) \lra B(k,A,R).$$

On the one hand this map is nullhomotopic, as each map $B(k,A,- \cdot [x])$ with $x \in \Hur_{G,N}^c$ having $N>0$ is. This is because of the functorial calculus of fractions explained in Lemma~\ref{lem:CalcFrac}, which shows that $- \cdot [x] :  R \to R$ is equal to an automorphism of $R$ followed by $[x] \cdot -$. But left multiplication by $[x]$ is simplicially nullhomotopic on this bar construction, as it is induced by the left $A$-module structure on $R$.

On the other hand, let us calculate the induced map $- \cdot U : H_{n-N,d}^A(R) \to H_{n,d}^A(R)$ using the Koszul complex for $A$-homology described in Section~\ref{sec:Koszul}, the relevant portion of which is
\begin{equation*}
\begin{tikzcd} 
k\{c\}^{\otimes d+1} \otimes R_{n-N-d-1} \rar \dar{-\cdot U}& k\{c\}^{\otimes d} \otimes R_{n-N-d}\rar \dar{-\cdot U}& k\{c\}^{\otimes d-1} \otimes R_{n-N-d+1} \dar{-\cdot U}\\
k\{c\}^{\otimes d+1} \otimes R_{n-d-1} \rar& k\{c\}^{\otimes d} \otimes R_{n-d}\rar& k\{c\}^{\otimes d-1} \otimes R_{n-d+1}.
\end{tikzcd}
\end{equation*}
The vertical maps are all isomorphisms as long as $n-d-1 \geq N_0$, by Proposition~\ref{prop:DefU}, so the induced map on homology in the middle position is an isomorphism in this range of degrees too.

Putting the results of the last two paragraphs together shows that the degree of $H_{*,d}^A(R)$ is at most $N_0+1 + d$, so the claimed result holds with $B_2 := N_0+1$.

For the addendum, note that $H_{*,0}^A(-)$ preserves epimorphisms, there is an epimorphism $A \to R$, and $H_{*,0}^A(A)$ is 1-dimensional supported in grading 0.
\end{proof}

\begin{proof}[Proof of Theorem~\ref{thm:RegularityA}]
Write 
$$B(k,A,M) = B(k,A,R) \otimes_R M \simeq B(k,A,R) \otimes^\bL_R M$$
and filter $B(k,A,R)$ by its grading to obtain a spectral sequence
$$E^1_{n,p,q} = \bigoplus_{a+b=p+q} H_{-q, a}^A(R ) \otimes H_{n+q, b}^R(M) \Longrightarrow H_{n,p+q}^A(M).$$
This again looks like Figure~\ref{fig:SS}.

Let me first show how to estimate $h_i^R(M)$ in terms of $h_i^A(M)$ for $i \leq 1$. In the case $p+q=0$ there can be no differentials leaving $E^r_{n,0,0}$, so it must vanish if $n > h_0^A(M)$. But $H_{0, 0}^A(R) \neq 0$ so $E^1_{n,0,0}$ contains $H_{n, 0}^R(M)$ as a summand, which must then vanish too, hence $h_0^R(M) \leq h_0^A(M)$. In the case $p+q=1$ differentials leaving $E^r_{n,1,0}$ go to $E^r_{n,r,-r} = H_{r, 0}^A(R ) \otimes H_{n-r, 0}^R(M)$, which vanishes if either $r >0$ (by the addendum in Lemma~\ref{lem:Reg}) or if $n-r > h_0^R(M)$ (so certainly if $n-r > h_0^A(M)$). Thus $E^1_{n,1,0}$ vanishes if both $n > h_0^A(M)$ and $n > h_1^A(M)$, but again $E^1_{n,1,0}$ contains $H^R_{n,1}(M)$ as a summand, giving $h_1^R(M) \leq \max(h_0^A(M), h_1^A(M))$.

The spectral sequence gives
$$h^A_d(M) \leq \max_{a+b=d}\{ h^A_a(R) + h^R_b(M)\}.$$
If $a=0$ then the relevant term is
\begin{align*}
h^A_0(R)+ h_d^R(M) &\leq 0+\max\{h_0^R(M),h_1^R(M)\} + B_1 \cdot (d-1) \\
&\leq \max\{h_0^A(M),h_1^A(M)\} + B_1 \cdot (d-1)
\end{align*}
using the addendum in Lemma~\ref{lem:Reg}, Proposition~\ref{prop:RegularityR}, and the above. If $a>0$ then the relevant term is
\begin{align*}
h^A_a(R) + h^R_b(M) &\leq B_2 + a + \max\{h_0^A(M),h_1^A(M)\} + B_1 \cdot (b-1) \\
&= B_2 + (1-B_1)a + \max\{h_0^A(M),h_1^A(M)\} + B_1 \cdot (d-1)
\end{align*}
using Lemma~\ref{lem:Reg} and the above. In either case, as long as $B_0 \geq \max\{B_1, B_2+1\}$ this is $\leq \max\{h_0^A(M),h_1^A(M)\} + B_0 \cdot (d-1)$. In particular, by comparison with the proofs of Proposition~\ref{prop:RegularityR} and Lemma~\ref{lem:Reg} it follows that $B_0 = N_0+2$ suffices.
\end{proof}

For homological stability one is interested in the range of degrees in which multiplication by $U$ induces an isomorphism on an $R$-module $M$, in other words the degree of $H_{*,d}(M \hcoker U)$ for $d=0$ and $d=1$. This can be estimated in terms of the $A$-homology of~$M$ as follows.

\begin{coro}\label{cor:AHomImpliesFP}
For a left $R$-module $M$ one has $\deg(H_{*,0}(M\hcoker U)) \leq h_0^A(M)+N_0$ and $\deg(H_{*,1}(M\hcoker U)) \leq \max(h_1^A(M), h_0^A(M))+N_0$.
\end{coro}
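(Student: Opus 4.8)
The plan is to relate $M \hcoker U$ to the $A$-homology of $M$ via the same kind of spectral-sequence argument used in the proof of Theorem~\ref{thm:RegularityA}, but now reading the information in the opposite direction. Recall that $M \hcoker U \simeq k \otimes^\bL_{k[U]} M$, and that $R \hcoker U \simeq k \otimes^\bL_{k[U]} R$ has homology vanishing in gradings $\geq N_0$ (by Proposition~\ref{prop:DefU}), concentrated in homological degrees $0$ and $1$. Writing $M \hcoker U \simeq (R \hcoker U) \otimes^\bL_R M$ and filtering $R \hcoker U$ by its grading produces the spectral sequence \eqref{eq:SSeqMmodU},
$$E^1_{n,p,q} = \bigoplus_{a+b=p+q} H_{-q,a}(R \hcoker U) \otimes H^R_{n+q,b}(M) \Longrightarrow H_{n,p+q}(M \hcoker U).$$
Since $H_{*,a}(R \hcoker U)$ vanishes for $a \notin \{0,1\}$ and has degree $< N_0$, the only $E^1$-terms contributing to the total degree $0$ line are subquotients of $H_{r,0}(R \hcoker U) \otimes H^R_{n+ \ast, 0}(M)$ with $r < N_0$, hence these vanish once $n > h^R_0(M) + N_0$... but I do not yet know $h^R_0(M)$ is controlled by $h^A_0(M)$ unless I first invoke the comparison established inside the proof of Theorem~\ref{thm:RegularityA}.

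So the cleanest route is: first recall from the proof of Theorem~\ref{thm:RegularityA} the inequalities $h^R_0(M) \leq h^A_0(M)$ and $h^R_1(M) \leq \max(h^A_0(M), h^A_1(M))$, which were obtained there by filtering $B(k,A,R) \otimes^\bL_R M$ and using $H^A_{0,0}(R) \neq 0$ together with the addendum $h^A_0(R) = 0$ of Lemma~\ref{lem:Reg}. With those in hand, I return to the spectral sequence \eqref{eq:SSeqMmodU}. For the $d=0$ statement: the group $H_{n,0}(M \hcoker U)$ receives contributions only from $E^\infty_{n,0,0}$, which is a subquotient of $E^1_{n,0,0} = \bigoplus_{r < N_0} H_{r,0}(R \hcoker U) \otimes H^R_{n+r', 0}(M)$ (indices arranged so $-q = r$), and there are no differentials into or out of the $p+q=0$ diagonal in the relevant range; this is zero once $n - r > h^R_0(M)$ for all $r < N_0$, i.e. once $n > h^R_0(M) + N_0 - 1$. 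Combined with $h^R_0(M) \leq h^A_0(M)$ this gives $\deg(H_{*,0}(M \hcoker U)) \leq h^A_0(M) + N_0$ (the $+1$ slack being harmless).

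For the $d=1$ statement I run the same bookkeeping one diagonal higher. The group $H_{n,1}(M \hcoker U)$ is built from $E^\infty$ on the $p+q=1$ diagonal, whose $E^1$-terms are subquotients of $H_{r,0}(R \hcoker U) \otimes H^R_{n+ \ast,1}(M)$ and $H_{r,1}(R \hcoker U) \otimes H^R_{n+\ast,0}(M)$ with $r < N_0$; possible differentials out of these land in the $p+q=0$ diagonal, which we have just seen vanishes for $n$ large, and differentials into them come from the $p+q=2$ diagonal, which however cannot interfere with the \emph{vanishing} conclusion (a zero group stays zero). Hence $E^1_{n, \cdot, \cdot}$ on this diagonal vanishes once $n - r > \max(h^R_0(M), h^R_1(M))$ for all $r < N_0$, i.e. once $n > \max(h^R_0(M), h^R_1(M)) + N_0 - 1$. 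Feeding in $h^R_0(M) \leq h^A_0(M)$ and $h^R_1(M) \leq \max(h^A_0(M), h^A_1(M))$ yields $\deg(H_{*,1}(M \hcoker U)) \leq \max(h^A_1(M), h^A_0(M)) + N_0$, as claimed.

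I expect the main obstacle to be the differential bookkeeping on the $p+q=1$ diagonal: one must be careful that incoming differentials from $p+q=2$ (which is nonzero in general, since $R \hcoker U$ only has homology in degrees $0,1$ but $M$ may have higher $H^R_{*,b}$) cannot \emph{create} surviving classes where the $E^1$-term already vanishes — which is automatic — while simultaneously checking that the $d=0$ conclusion is genuinely needed to kill potential targets of differentials \emph{out} of the $p+q=1$ diagonal. Since $R \hcoker U$ and $M \hcoker U$ both have homology concentrated in homological degrees $0$ and $1$, the spectral sequence is extremely thin in the $(p,q)$-directions and this is ultimately routine, but it requires matching the index conventions of \eqref{eq:SSeqMmodU} carefully; everything else is a direct quotation of facts already proved.
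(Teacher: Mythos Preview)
Your proposal is correct and follows essentially the same route as the paper: use the spectral sequence \eqref{eq:SSeqMmodU}, invoke the inequalities $h^R_0(M) \leq h^A_0(M)$ and $h^R_1(M) \leq \max(h^A_0(M), h^A_1(M))$ established in the proof of Theorem~\ref{thm:RegularityA}, and bound the $E^1$-terms on the $p+q=0$ and $p+q=1$ diagonals. The paper's proof is slightly leaner in that it does not discuss differentials at all---once every $E^1$-term on a diagonal vanishes, so does $E^\infty$ there, so your analysis of incoming and outgoing differentials is correct but unnecessary.
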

\begin{proof}
Use the spectral sequence \eqref{eq:SSeqMmodU} from the proof of Proposition~\ref{prop:RegularityR} again. 

The terms which could contribute to $H_{n,0}(M\hcoker U)$ are $H_{-q,0}(R \hcoker U) \otimes H_{n+q,0}^R(M)$ with~$q \leq 0$. Now $H_{-q,0}(R \hcoker U)=0$ for $-q \geq N_0$, and by the proof of Theorem~\ref{thm:RegularityA} $H_{n+q,0}^R(M)=0$ for $n+q > h_0^A(M)$, since $h_0^A(M) \geq h_0^R(M)$. Thus these terms all vanish for $n > h_0^A(M)+N_0$.

The terms which could contribute to $H_{n,1}(M\hcoker U)$ are
$$H_{-q,1}(R \hcoker U) \otimes H_{n+q,0}^R(M) \oplus H_{-q,0}(R \hcoker U) \otimes H_{n+q,1}^R(M) \text{ for } q \leq 0.$$
Now $H_{-q,i}(R \hcoker U)=0$ for $-q \geq N_0$, and by the proof of Theorem~\ref{thm:RegularityA} $H_{n+q,1}^R(M)=0$ for $n+q > \max(h_1^A(M), h_0^A(M))$, since $\max(h_1^A(M), h_0^A(M)) \geq h_1^R(M)$. Thus these terms all vanish for $n > \max(h_1^A(M), h_0^A(M))+N_0$.
\end{proof}

\section{Proof of homological stability}

Given the preparations now made, the proof of homological stability for the spaces of marked branched covers $\Hur_{G,n}^c$ is very formal. In this context homological stability in degree $d$ means showing that the maps
$$U \cdot - : H_{n-N,d}(A) \lra H_{n,d}(A)$$
are isomorphisms for all large enough $n$, and by Corollary~\ref{cor:AHomImpliesFP} to show this it is enough to bound the degrees of $H^A_{*,0}(H_{*,d}(A))$ and  $H^A_{*,1}(H_{*,d}(A))$ appropriately. By the regularity theorem it is equivalent to bound the degrees of all $H^A_{*,p}(H_{*,d}(A))$ appropriately, which is better suited to an inductive proof.

\begin{prop}
$\deg H^A_{*,p}(H_{*,d}(A)) \leq  B_0(2d+p)$.
\end{prop}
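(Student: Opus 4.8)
The plan is to prove the estimate $\deg H^A_{*,p}(H_{*,d}(A)) \leq B_0(2d+p)$ by induction on the homological degree $d$ of $A$, using the Regularity Theorem (Theorem~\ref{thm:RegularityA}) as the engine that reduces, for each fixed $d$, the general case of $p$ to the cases $p=0$ and $p=1$. The key input is that $H_{*,d}(A)$ is a discrete $R$-module (it is supported in homological degree $0$ as a $k$-module, being the $d$-th homology group of a chain complex, now regarded with its internal $\bN$-grading), so the Regularity Theorem applies to $M = H_{*,d}(A)$ and gives
$$h^A_p(H_{*,d}(A)) \leq \max\bigl(h^A_0(H_{*,d}(A)),\, h^A_1(H_{*,d}(A))\bigr) + B_0 \cdot (p-1)$$
for all $p \geq 1$. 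So it suffices to bound $h^A_0(H_{*,d}(A))$ and $h^A_1(H_{*,d}(A))$ by $2B_0 d$; then the Regularity estimate yields $h^A_p \leq 2B_0 d + B_0(p-1) \leq B_0(2d+p)$, and the case $p=0$ is covered directly.

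To bound $h^A_0$ and $h^A_1$ of $H_{*,d}(A)$ I would feed the truncations of $A$ into a long exact sequence. Let $\tau_{\leq d-1}A$ and $\tau_{\geq d}A$ denote the Postnikov truncations of $A$ in the internal homological-degree direction, fitting into a cofibre sequence of $A$-modules (or at least $\tau_{\leq d-1}$-modules in a way that is harmless after applying $k \otimes^{\bL}_A -$); the point is that $H_{*,d}(A)$ appears as $\pi_d$ of the relevant piece, and $k \otimes^{\bL}_A -$ applied to a $d$-connected-in-the-relevant-sense object has its $A$-homology in degree $\leq i$ controlled by lower pieces. Concretely, running the hyper-homology/Postnikov spectral sequence for $k \otimes^{\bL}_A A \simeq k$ (whose homology is $1$-dimensional in bidegree $(0,0)$ by Lemma~\ref{lem:Reg}'s addendum applied to $A$ itself, or just directly) one reads off that $H^A_{*,*}(H_{*,d}(A))$ for small total degree is built from $H^A_{*,*}(H_{*,d'}(A))$ with $d' < d$, together with the known vanishing of $k \otimes^{\bL}_A A$ in positive degrees. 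This is where the induction closes: the base case $d=0$ is $H_{*,0}(A) = R$, for which Lemma~\ref{lem:Reg} gives $h^A_0(R) = 0$ and $h^A_d(R) \leq B_2 + d \leq B_0 \cdot d$ (recall $B_0 \geq B_2 + 1$), comfortably inside the claimed bound $B_0(2d+p)$ with $d=0$.

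The step I expect to be the main obstacle is making the Postnikov/truncation bookkeeping precise: $\tau_{\leq d-1}A$ need not be an augmented dga in an obvious way, so one must either work with it as an $A_\infty$-algebra or, more simply, track everything through the filtration spectral sequence of $A$ by the Postnikov tower and argue that the "off-diagonal" contributions to $H^A_{n,i}(H_{*,d}(A))$ for $i \in \{0,1\}$ come only from $H^A_{n,\bullet}(H_{*,d'}(A))$ with $d' \leq d$ and $\bullet < i + \text{(something small)}$, so that the inductive hypothesis $h^A_{p'}(H_{*,d'}(A)) \leq B_0(2d'+p')$ bounds each such term by at most $B_0(2(d-1) + 1) + (\text{small}) \leq 2B_0 d$. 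Keeping the constant exactly $B_0 = N_0+2$ through this may require mild care, but the shape of the estimate — a factor $2$ on $d$ to absorb the truncation shifts and a factor $1$ on $p$ coming straight from the Regularity Theorem — is exactly what the inductive step produces, so no new arithmetic beyond comparing $B_0$ with $B_1, B_2$ should be needed.
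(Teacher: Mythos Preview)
Your proposal is correct and follows essentially the same strategy as the paper: reduce to $p\in\{0,1\}$ via the Regularity Theorem, then bound these by induction on $d$ using a spectral sequence converging to $H_{*,*}(B(k,A,A))\simeq H_{*,*}(k)$ whose relevant page is $H^A_{n,s}(H_{*,d}(A))$. The paper obtains this spectral sequence by filtering the right-hand copy of $A$ by its $\bN$-grading (the $E^1$-page is then the Koszul complex of Section~\ref{sec:Koszul}, and the $E^2$-page is exactly your hyperhomology page), whereas your Postnikov filtration of $A$ as an $A$-module lands on the same page directly at $E^1$; the ``bookkeeping obstacle'' you anticipate is therefore absent, since the truncations $\tau_{\geq d}A$ are automatically left $A$-modules and no $A_\infty$-structures are needed. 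One small correction: the incoming differentials to $H^A_{n,1}(H_{*,d}(A))$ come from subquotients of $H^A_{n,r+1}(H_{*,d-(r-1)}(A))$ for $r\geq 2$, so the induction gives $h^A_1(H_{*,d}(A))\leq B_0(2d+1)$ rather than $2B_0 d$, but feeding this into the Regularity Theorem still yields exactly $B_0(2d+1)+B_0(p-1)=B_0(2d+p)$.
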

\begin{proof}
Filtering the $A$-module $A$ by its grading induces a filtration of $B(k,A,A) \simeq k$ with associated graded $B(k,A,k) \otimes A$, and so an associated spectral sequence
$$E^1_{n,p,q} = k\{c\}^{\otimes n+q}\otimes H_{-q,p-n}(A) \Longrightarrow H_{n,p+q}(B(k,A,A))$$
which converges to 0 for $(n,p+q) \neq (0,0)$. As discussed in Section~\ref{sec:Koszul}, taking homology with respect to the $d^1$-differential computes the $A$-module homology of the graded $R$-modules $H_{*,p-n}(A)$, considered as discrete modules, so the spectral sequence takes the form
$$E^2_{n,p,q} = H^A_{n, n+q}(H_{*, p-n}(A)) \Longrightarrow H_{n,p+q}(B(k,A,A)),$$
with differentials $d^r : E^r_{n,p,q} \to E^r_{n, p+r-1, q-r}$.

\begin{minipage}[t]{0.47\textwidth}
\vspace{0ex}
\centering
  \includegraphics[width=\textwidth]{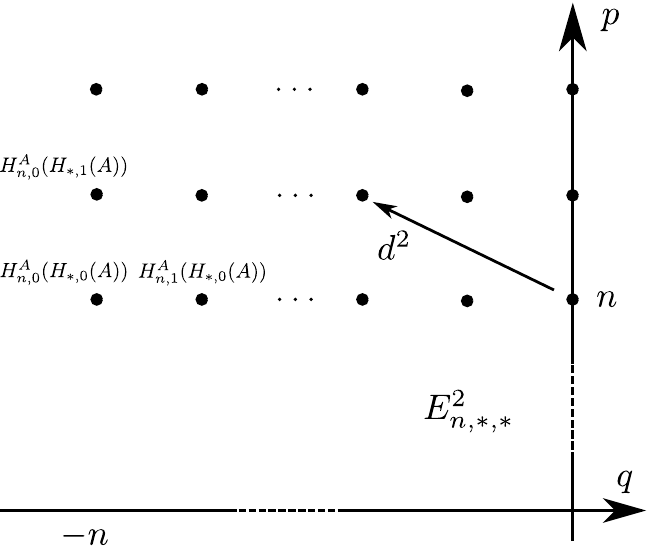}
\end{minipage}\hfill
\begin{minipage}[t]{0.48\textwidth}
\hspace{2ex} Let us prove the claim in the proposition by induction on $d$: it holds for $d<0$, so suppose it holds for all $d'<d$. 

\hspace{2ex} Firstly, $E^2_{n, d+n, -n} = H^A_{n,0}(H_{*, d}(A))$, but $E^\infty_{n, d+n, -n}=0$ for $n>0$ so $E^2_{n, d+n, -n}$ must die during the spectral sequence. There are no differentials leaving this group, and the possible differentials entering this group come from $E^r_{n, d+n-(r-1), -n+r}$ with $r \geq 2$, which is a subquotient of $H^A_{n, r}(H_{*, d-(r-1)}(A))$. 

\hspace{2ex} This gives the estimate
\end{minipage}

\begin{align*}
h_0^A(H_{*,d}(A)) &\leq \max_{r \geq 2}\{h_r^A(H_{*,d-(r-1)}(A))\}\\
& \leq \max_{r \geq 2}\{B_0(2(d-(r-1))+r)\}\\
& \leq B_0(2d).
\end{align*}

Secondly, $E^2_{n, d+n, -n+1} = H^A_{n,1}(H_{*, d}(A))$, which for $n>0$ must also die during the spectral sequence. There are again no differentials leaving this group, and the possible differentials entering this group come from $E^r_{n, d+n-(r-1), -n+r+1}$ with $r \geq 2$, which is a subquotient of $H^A_{n, r+1}(H_{*, d-(r-1)}(A))$. Thus
\begin{align*}
h_1^A(H_{*,d}(A)) &\leq \max_{r \geq 2}\{h_{r+1}^A(H_{*,d-(r-1)}(A))\}\\
& \leq \max_{r \geq 2}\{B_0(2(d-(r-1))+r+1)\}\\
& \leq B_0(2d+1).
\end{align*}

Finally, by the regularity theorem we have $h_p^A(H_{*, d}(A)) \leq  B_0(2d+1) + B_0 (p-1) = B_0(2d+p)$ as required.
\end{proof}

As described above, and recalling that $H_{n,d}(A) = H_d(\Hur_{G,n}^c;k)$, by Corollary~\ref{cor:AHomImpliesFP} the cokernel of
\begin{equation}\label{eq:NonConnStab}
U \cdot - : H_{d}(\Hur_{G,n-N}^c;k) \lra H_{d}(\Hur_{G,n}^c;k)
\end{equation}
vanishes for $n> B_0(2d) +N_0$, and its kernel vanishes for $n> B_0(2d+1)+N_0$, which proves homological stability for the spaces $\Hur_{G,n}^c$. Theorem~\ref{thm:ReqHomStab2} claims an analogous homological stability theorem for the spaces $\CHur_{G,n}^c$ of connected branched covers, and furthermore claims that the stabilisation map is $G$-equivariant. By its definition the element $U$ is $G$-invariant and so the map $U \cdot -$ is $G$-equivariant, and hence Theorem~\ref{thm:ReqHomStab2} is a consequence of the following. 

\begin{coro}
Let $(G,c)$ be non-splitting. There are constants $E_0$ and $E_1$, depending only on $(G,c)$, such that 
\begin{equation*}
U \cdot - : H_d(\CHur_{G,n-N}^c;\bQ) \lra H_d(\CHur_{G,n}^c;\bQ)
\end{equation*}
is an isomorphism for all $n \geq E_0 + E_1 \cdot d$.
\end{coro}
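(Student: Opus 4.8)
The plan is to reduce the statement about the connected Hurwitz spaces $\CHur_{G,n}^c$ to the statement already proved for the full spaces $\Hur_{G,n}^c$, exploiting the ring-of-components structure from Section~\ref{sec:Components}. Recall that $H_{*,d}(A) = H_d(\Hur_{G,n}^c;\bQ)$ splits, via the homotopy equivalence $\Hur_{G,n}^c \simeq c^n \hcoker \beta_n$, according to the \emph{global monodromy} of the underlying tuple: a tuple $[g_1,\dots,g_n]$ is sent to $g_1\cdots g_n \in G$, and this is invariant under the $\beta_n$-action. Since multiplication by $U = U_D = \sum_{g \in c}[g]^{|g|D}$ multiplies the global monodromy by $\big(\prod_{g\in c} g^{|g|D}\big)$, which is the identity element of $G$ as each $g^{|g|}=1$, the operator $U \cdot -$ preserves the decomposition of $H_{*,*}(A)$ by global monodromy. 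Thus the homological stability for $\Hur_{G,n}^c$ proved via \eqref{eq:NonConnStab}, being an assertion about an isomorphism of bigraded $R$-modules, restricts to each global-monodromy summand; I would isolate the summand on which $U\cdot-$ acts and which corresponds to tuples generating $G$.

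More precisely, $\CHur_{G,n}^c \simeq c' \hcoker \beta_n$ where $c' \subset c^n$ is the set of generating tuples, so $H_d(\CHur_{G,n}^c;\bQ)$ is the summand of $H_{n,d}(A)$ spanned by $\beta_n$-orbits of generating tuples. Multiplication by $U$ sends a generating tuple to a generating tuple (appending more elements of $c$ cannot shrink the generated subgroup, which is already $G$), so $U\cdot-$ preserves this summand; call the resulting sub-bimodule $A^{gen} \subseteq A$. Since $A^{gen}$ is a summand of $A$ as a bigraded $k$-module and the structure maps of the spectral sequences in the previous section (the Koszul complex, the filtration by grading) are all built from the $A$-module structure, they restrict to $A^{gen}$. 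In particular the computation $\deg H^A_{*,p}(H_{*,d}(A)) \leq B_0(2d+p)$ and Corollary~\ref{cor:AHomImpliesFP} apply verbatim to $H_{*,d}(A^{gen})$ in place of $H_{*,d}(A)$ — indeed one only needs the inequality for the summand, which follows from the inequality for the whole. Hence the cokernel of $U\cdot- : H_d(\CHur_{G,n-N}^c;\bQ)\to H_d(\CHur_{G,n}^c;\bQ)$ vanishes for $n > B_0(2d)+N_0$ and its kernel vanishes for $n > B_0(2d+1)+N_0$; taking $E_1 := 2B_0$ and $E_0 := B_0+N_0$ gives an isomorphism for all $n \geq E_0 + E_1\cdot d$, as claimed, and these constants depend only on $(G,c)$.

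The step I expect to be the main obstacle is justifying cleanly that every construction in Sections~\ref{sec:Koszul}--\ref{sec:delooping} is compatible with the global-monodromy decomposition — or, equivalently, that $A^{gen}$ really is a left $A$-submodule (it is not a subalgebra, but that is not needed) and that $B(k,A,A^{gen})$, the Koszul complex, and the filtration spectral sequence all split off from their counterparts for $A$. The subtlety is that $A^{gen}$ is only a summand of $A$ as a graded $R$-module, not as an $R$-bimodule nor as a dga, so one must check that the $d^1$-differential of the Koszul complex (which mixes the two sides via the twisted action $(g_i)^{g_{i+1}\cdots g_q}\cdot[m]$) nonetheless preserves the summand — it does, because conjugation does not change the subgroup generated. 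Once one records that the subgroup generated by a tuple is preserved by all the braid, face, and module-action maps in play, the argument is the formal restriction described above; alternatively, since all the bounds proved for $H_{*,d}(A)$ are upper bounds on $\deg$ of a homology group, and $\deg$ of a summand is at most $\deg$ of the whole, one may bypass splitting the spectral sequences entirely and simply observe that $H_{*,d}(A^{gen})$ is an $R$-submodule quotient with the same $A$-homology degree bounds, feeding these directly into Corollary~\ref{cor:AHomImpliesFP}.
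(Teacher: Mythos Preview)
There is a genuine gap in your argument. You correctly observe that $H_{*,d}(A^{gen})$ is a left $R$-submodule of $H_{*,d}(A)$ (left multiplication by elements of $c$ cannot shrink the subgroup generated by a tuple), and hence that the Koszul complex for $H_{*,d}(A^{gen})$ sits inside that for $H_{*,d}(A)$ as a subcomplex. But it is \emph{not} a direct summand of left $R$-modules: the complementary vector space $\bigoplus_{H < G} H_{*,d}(A^{(H)})$ is not closed under the left $R$-action, because multiplying a tuple generating $H \lneq G$ by some $g \in c \setminus H$ produces a tuple generating a strictly larger subgroup, possibly $G$ itself. For the same reason the projection $H_{*,d}(A) \to H_{*,d}(A^{gen})$ is not an $R$-module map, so $H_{*,d}(A^{gen})$ is not a quotient either. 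Consequently neither of your proposed routes---splitting the Koszul complex, or invoking that ``$\deg$ of a summand is at most $\deg$ of the whole''---is available. And in general $\deg$ of $A$-homology does not pass to submodules: already for $R = k[x]$ the submodule $(x^n) \subset R$ has $h^R_0 = n$ while $h^R_0(R) = 0$. (Your opening remarks about the global-monodromy decomposition are a red herring: that grading is unrelated to the subgroup generated, and is also not a left $R$-module splitting.)

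The paper circumvents this by an entirely different mechanism: it filters $A$ by the order of the subgroup generated, setting $A^{\geq m} = \bigoplus_{|H|\geq m} A^{(H)}$, and shows by induction on $m$ that $U\cdot-$ is an isomorphism on $H_{*,*}(A^{\geq m})$ in a linear range. The base case $m=0$ is the stability for $\Hur$ already established. For the inductive step one uses the long exact sequence of the pair $(A^{\geq m}, A^{\geq m+1})$ and the five-lemma, so one must know that $U\cdot-$ is an isomorphism on the associated graded $\bigoplus_{|H|=m} H_{*,*}(A^{(H)})$. Here the key observation is that on this quotient $U$ acts as $\bigoplus_H U^{(H)}$ with $U^{(H)} = \sum_{g \in c\cap H}[g]^{|g|D}$, and each $A^{(H)}$ is precisely $C_*(\CHur_H^{c\cap H};\bQ)$ for the smaller non-splitting pair $(H, c\cap H)$---so one may invoke the Corollary itself, by induction on $|G|$. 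This double induction (on $m$ and on $|G|$) is the missing idea.
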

\begin{proof}
For each subgroup $H \leq G$ write $A^{(H)} \leq A = C_*(\Hur_G^c;\bQ)$ for the sub-($\bN$-graded chain complex) given by the chains on those path-components of $\Hur_G^c$ represented by $[g_1, g_2, \ldots, g_n]$ where the subgroup generated by $\{g_i\}$ is $H$. Let $A^{\geq m}$ denote the dg ideal of $A$ given by $\bigoplus_{|H| \geq m} A^{(H)}$. Then $A=A^{\geq 0}$, and $A^{\geq |G|} = A^{(G)} = C_*(\CHur_G^c;\bQ)$, so that it is required to show that $U \cdot - : H_{n-N,d}(A^{\geq |G|}) \to H_{n,d}(A^{\geq |G|})$ induces an isomorphism in a linear range of bidegrees. 

The map $U \cdot -$ preserves the filtration $A^{\geq \bullet}$, and I will show that it induces an isomorphism on $H_{*,*}(A^{\geq m})$ in a linear range of degrees by induction on $m$, the base case $m=0$ being given by \eqref{eq:NonConnStab}. On the associated graded
$$H_{*,*}(A^{\geq m}/A^{\geq m+1}) = \bigoplus_{|H|=m} H_{*,*}(A^{(H)})$$
the map $U \cdot -$ decomposes as $\bigoplus_{|H|=m} U^{(H)} \cdot -$, where $U^{(H)} = \sum_{g \in c \cap H} [g]^{|g|D}$. By induction on $|G|$ one may suppose that for each proper subgroup $H < G$ the map $U^{(H)} \cdot - : H_{n-N,d}(A^{(H)}) \to H_{n,d}(A^{(H)})$ is an isomorphism for all $n \geq E_0^H + E_1^H \cdot d$. The result then follows from an iterated application of the 5-lemma.
\end{proof}

\section{Outlook}

In later work Ellenberg, Venkatesh, and Westerland~\cite{EVW2} explained what the required topological input would be to strengthen Theorem~\ref{thm:Main} from a statement about the limit as $q \to \infty$ of the upper and lower densities $\delta^\pm(q)$ to a statement about the actual values of the upper and lower densities for all large enough  $q$. In fact the preprint~\cite{EVW2} attempted to prove this strengthening, but came across the following difficulty.

\vspace{2ex}

The discussion in Section 5 shows that for $V := \prod_{g \in c}[g]^{|g|} \in R_{|c| |g|}$, on one hand
$$H_{*}(\CHur_{G}^c;k)[V^{-1}] \cong H_*(\Omega B\Hur_G^c;k)$$
and on the other hand each path-component of $\Omega B\Hur_G^c$ has the same rational homology as $S^1$. So \emph{if one knew} the map
\begin{equation*}
V \cdot - : H_d(\CHur_{G,n}^c;\bQ) \lra H_{d}(\CHur_{G,n+|c||g|}^c;\bQ)
\end{equation*}
was an isomorphism as long as $n \geq E_0 + E_1 \cdot d$ for some constants $E_0$ and $E_1$ depending only on the abelian $\ell$-group $A$, then it would follow that each path-component of $\CHur_{G,n}^c$ had the same rational homology as $S^1$ in degrees $d \leq \frac{n-E_0}{E_1}$. 

One could then employ the argument described in Section~\ref{sec:StabHomArg} to deduce that
$$\lim_{n \to \infty} \frac{\# \mathsf{Hn}_{G,n}^c(\bF_q)}{q^n} = 1-q^{-1}$$
for all large enough $q$ which are good for $\ell$. The discussion in Section 2 shows that
$$\frac{\sum_{L \in \mathfrak{S}_n} m_A(L)}{|\mathfrak{S}_n|} = \frac{\# \mathsf{Hn}_{G,n}^c(\bF_q)}{q^n-q^{n-1}},$$
giving that
$$\lim_{n \to \infty} \frac{\sum_{L \in \mathfrak{S}_n} m_A(L)}{|\mathfrak{S}_n|} = 1$$
for all large enough $q$ which are good for $\ell$. In the notation of Section~\ref{sec:CLHeur}, it would then follow that
$$\delta^+(q) = \delta^-(q) = \mu(A)$$
for all large enough $q$ which are good for $\ell$, with no need to take the limit as $q \to \infty$.

\vspace{2ex}

However, the above argument cannot yet be made because it is not yet known whether multiplication by $V$ is an isomorphism in any range of homological degrees, only that multiplication by $U$ is. These are quite different maps, and it seems very difficult to approach stability for $V$ by a variant of the method explained here. Ellenberg, Venkatesh, and Westerland's argument for $U$ proceeds by showing that it induces isomorphisms on homology for the larger spaces $\Hur_{G,n}^c$ of possibly disconnected branched covers, but these definitely do not enjoy homological stability with respect to $V$ (even their $0$-th homology fails to stabilise). In a different direction, if one replaces the monoid $\Hur_{G}^c$ by its submonoid $\CHur_{G}^c$ of connected branched covers then a statement as simple as Theorem~\ref{thm:KosDual} cannot possibly hold. However, in contrast to these difficulties, in every other situation in which I have seen homological stability it is maps akin to $V$ which induce isomorphisms. Homological stability of the spaces $\CHur_{G,n}^c$ with respect to the maps $V$ should serve as a guiding problem for mathematicians working in this subject.

\vspace{2ex}

\noindent\textbf{Acknowledgements.} I would like to thank Andrea Bianchi, Jordan Ellenberg, Manuel Kannich, Jack Thorne, and Nathalie Wahl for useful comments. The author was partially supported by the ERC under the European Union's Horizon 2020 research and innovation programme (grant agreement No.\ 756444), and by a Philip Leverhulme Prize from the Leverhulme Trust.

\bibliographystyle{smfalpha}
\bibliography{Hurwitz}  

\end{document}